\documentclass[12pt,oneside,a4paper]{amsart}

\usepackage{amssymb,amsmath,amsthm,hyperref}

\usepackage{stmaryrd}
\usepackage[textwidth=15cm,textheight=22cm]{geometry}

\usepackage{diagrams}
\usepackage{enumerate}





\DeclareMathOperator{\dom}{dom}   




\newcommand{\bbar}{{\ensuremath{\bar{b}}}}



\DeclareMathOperator{\rk}{rk}

\DeclareMathOperator{\dcl}{dcl}   

\DeclareMathOperator{\td}{td}  




\DeclareMathOperator{\Jac}{Jac}  
\DeclareMathOperator{\Ann}{Ann}   


\DeclareMathOperator{\Der}{Der}  



\DeclareMathOperator{\grk}{grk} 








\newcommand{\restrict}[1]{\ensuremath{\!\!\upharpoonright_{#1}}}

\newcommand{\N}{\ensuremath{\mathbb{N}}}
\newcommand{\Z}{\ensuremath{\mathbb{Z}}}
\newcommand{\Q}{\ensuremath{\mathbb{Q}}}
\newcommand{\R}{\ensuremath{\mathbb{R}}}
\newcommand{\C}{\ensuremath{\mathbb{C}}}


\newcommand{\Ran}{\ensuremath{\mathbb{R}_{\mathrm{an}}}}
\newcommand{\Rexp}{\ensuremath{\mathbb{R}_{\mathrm{exp}}}}

\newcommand{\ga}{\ensuremath{\mathbb{G}_\mathrm{a}}}  
\newcommand{\gm}{\ensuremath{\mathbb{G}_\mathrm{m}}}  





\renewcommand{\le}{\ensuremath{\leqslant}}
\renewcommand{\ge}{\ensuremath{\geqslant}}
\newcommand{\tuple}[1]{\ensuremath{\langle #1 \rangle}}
\newcommand{\class}[2]{\ensuremath{\left\{ #1 \,\left|\, #2 \right.\right\}}}



\newcommand{\subs}{\subseteq} 
\newcommand{\sups}{\supseteq} 




\newcommand{\minus}{\ensuremath{\smallsetminus}}
\newcommand{\powerset}{\ensuremath{\mathcal{P}}} 

\newcommand{\strong}{\ensuremath{\lhd}} 
\newcommand{\nstrong}{\ensuremath{\not\kern-4pt\lhd\;}} 


\newcommand{\cross}{\ensuremath{\times}}



\newbox\noforkbox \newdimen\forklinewidth
\forklinewidth=0.3pt \setbox0\hbox{$\textstyle\smile$}
\setbox1\hbox to \wd0{\hfil\vrule width \forklinewidth depth-2pt
  height 10pt \hfil}
\wd1=0 cm \setbox\noforkbox\hbox{\lower 2pt\box1\lower
2pt\box0\relax}
\def\unionstick{\mathop{\copy\noforkbox}\limits}

\def\nonfork_#1{\unionstick_{\textstyle #1}}

\setbox0\hbox{$\textstyle\smile$} \setbox1\hbox to \wd0{\hfil{\sl
/\/}\hfil} \setbox2\hbox to \wd0{\hfil\vrule height 10pt depth
-2pt width
                \forklinewidth\hfil}
\wd1=0 cm \wd2=0 cm
\newbox\doesforkbox
\setbox\doesforkbox\hbox{\lower 2pt\box1 \lower
2pt\box2\lower2pt\box0\relax}
\def\nunionstick{\mathop{\copy\doesforkbox}\limits}

\def\fork_#1{\nunionstick_{\textstyle #1}}


\newcommand{\ra}[3]{\ensuremath{#1 \stackrel{#2}{\longrightarrow} #3}}



\newcommand{\leteq}{\mathrel{\mathop:}=}

\newtheorem{prop}{Proposition}[section]

\newtheorem{theorem}[prop]{Theorem}
\newtheorem{lemma}[prop]{Lemma}

\newtheorem{fact}[prop]{Fact}

\theoremstyle{definition}
\newtheorem{defn}[prop]{Definition}

\newcommand{\F}{\ensuremath{\mathcal{F}}}
\newcommand{\G}{\ensuremath{\mathcal{G}}}

\newcommand{\SR}{\mathrm{SR}}

\DeclareMathOperator{\hcl}{hcl}

\newcommand{\Rbar}{\ensuremath{\bar{\mathbb{R}}}}

\newcommand{\PR}{\ensuremath{\mathrm{PR}}}
\newcommand{\RPRF}{\ensuremath{\R_{\mathrm{PR}(\F)}}}

\newcommand{\ISRequivalent}{ISR-equivalent}
\newcommand{\ISR}{ISR}

\renewcommand {\d}[2] {\frac{\partial #1}{\partial #2}}

\DeclareMathOperator{\graph}{graph}


\newcommand{\Rtilde}{\tilde{\R}}

\title{Local Interdefinability of Weierstrass elliptic functions}
\author{Gareth Jones, Jonathan Kirby, Tamara Servi}
\date{version 1.0, \today}
\keywords{o-minimality, local definability, Weierstrass p-function, predimension}
\subjclass{03C64, 11U09}

\address{Gareth Jones \\School of Mathematics \\ University of Manchester \\ Oxford Road \\Manchester M13 9PL }
\email{gareth.jones-3@manchester.ac.uk}
\address{Jonathan Kirby\\ School of Mathematics\\ University of East Anglia\\ Norwich Research Park, Norwich NR4 7TJ}
\email{jonathan.kirby@uea.ac.uk}
\address{Tamara Servi \\ Centro de Matem\'{a}tica e Applica\c c\~oes Fundamentais  \\ Av. Prof. Gama Pinto, 2 1649-003\\ Lisboa (Portugal)}
\email{tamara.servi@gmail.com}

\begin{document}
\begin{abstract}
We explain which Weierstrass $\wp$-functions are locally definable from other $\wp$-functions and exponentiation in the context of o-minimal structures. The proofs make use of the predimension method from model theory to exploit functional transcendence theorems in a systematic way.
\end{abstract}

\maketitle

\section{Introduction}

James Ax \cite{Ax71} proved a version of Schanuel's famous conjecture in transcendental number theory, with numbers replaced by power series. Ax's theorem has been influential in model theory, particularly in work of Boris Zilber \cite{Zilber02esesc} and of the second author \cite{EAEF}. Our work here is motivated by an application of Ax's theorem to definability in the real exponential field, $\R_{\exp}$, due to Bianconi \cite{bianconi}. Using Ax's theorem in conjunction with Wilkie's model completeness result for $\R_{\exp}$ \cite{Wilkie96} (and part of the method of proof), Bianconi showed that no restriction of the sine function to an open interval is definable in $\R_{\exp}$. It is natural to ask a somewhat analogous question about definability with Weierstrass $\wp$-functions. Suppose that $\wp_1,\ldots, \wp_{N+1}$ are such functions. When is some restriction of (the real and imaginary parts of) $\wp_{N+1}$ definable in the real field expanded by suitable restrictions of (the real and imaginary parts of) $\wp_1,\ldots,\wp_N$?

In general it is not sensible to consider the definability of $\wp$-functions in their entirety, since these functions are periodic with respect to a lattice in $\C$ and hence any expansion of the real field $\Rbar$ in which a $\wp$-function is definable will interpret second-order arithmetic. Instead, we consider \emph{local definability} of a function $f$, which means definability of the restriction of $f$ to some neighbourhood of each point of its domain. Given a set $\F$ of analytic functions, there is a smallest expansion of $\Rbar$ in which all the functions in $\F$ are locally definable, which we denote $\RPRF$. Precise definitions are given in section~\ref{locdef section} below.

In this context we answer the above question completely, and also allow the exponential function to be included.
\begin{theorem}\label{interdefinability}
Let  $\F = \{\exp, f_1,\ldots,f_N\}$, where each of the $f_i$ is a Weierstrass $\wp$-function and $\exp$ is the exponential function. Let $g$ be another Weierstrass $\wp$-function. Then $g$ is locally definable in $\RPRF$ (with parameters) if and only if it can be obtained from one of $f_1,\ldots,f_N$ by isogeny and Schwartz reflection. If we omit $\exp$ from $\F$ then the exponential function is not locally definable in $\RPRF$.
\end{theorem}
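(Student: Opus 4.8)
The plan is to use the predimension machinery together with the relevant functional transcendence theorems (Ax's theorem for $\exp$, and its analogue for Weierstrass $\wp$-functions, due to Ax and to Bertrand--Kirby/Kirby) to set up a dimension count that forbids nontrivial local definability unless forced by isogeny and Schwartz reflection. The key technical input from the earlier part of the paper is presumably a model-completeness / "every locally definable function is piecewise given by composition with the generating functions" statement for $\RPRF$ in the spirit of Wilkie's theorem for $\R_{\exp}$, which lets one reduce an abstract definability assumption to a concrete algebraic relation among the coordinate functions of $g$ and of the $f_i$.

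First I would suppose, for contradiction, that $g$ is locally definable in $\RPRF$. Using the model-completeness result, I would produce, on some open set, a system of polynomial equations expressing the real and imaginary parts of $g(z)$ in terms of finitely many values $f_{i}(\text{(affine functions of }z\text{)})$, $\exp(\text{(affine functions of }z\text{)})$, and their derivatives. Passing to formal power series around a generic point, I would then invoke the Ax--Schanuel type theorem appropriate to the mixed setting (Brownawell--Kubota / Ax for products of $\gm$ and elliptic curves, or the uniform Ax--Schanuel of Ax and of Aslanyan--Kirby): the predimension of the tuple consisting of $z$, the arguments, $g(z)$ and the $f_i$-values must be non-negative, and equality (no transcendence defect) forces the arguments to satisfy a linear dependence over $\Q$ (respectively over the endomorphism ring of the relevant elliptic curve). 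Chasing this linear dependence back through the equations shows that the lattice of $g$ must be commensurable with (an image under Schwartz reflection of) the lattice of one of the $f_i$ — i.e. $g$ is obtained from that $f_i$ by isogeny and Schwartz reflection. Conversely, isogeny is an algebraic relation and Schwartz reflection is $z \mapsto \bar z$ on the real structure, so both operations preserve local definability; this direction is routine.

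For the last sentence (omitting $\exp$): here I would run the same predimension count but now with only elliptic functions among the generators. If $\exp$ were locally definable in $\R_{\mathrm{PR}(\{f_1,\dots,f_N\})}$, the model-completeness result would again give a polynomial relation between $\exp$ (and its derivative, which is itself) and finitely many $\wp_i$-values and their derivatives on an open set. The point is that the "geometry" of the one-dimensional algebraic group $\gm$ is genuinely different from that of elliptic curves: an Ax--Schanuel statement for the mixed product $\gm \times E_1 \times \cdots \times E_N$ (again Brownawell--Kubota, or Ax's original theorem) shows that a nonconstant $\exp$-term cannot lie in the field generated by $z$, by $\wp_i$-terms and their derivatives, because doing so would produce a formal subgroup of $\gm \times \prod E_i$ projecting onto the $\gm$-factor that is incompatible with the transcendence of $\exp$ over the elliptic function field. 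More concretely, one gets a contradiction with the algebraic independence of $\exp(z)$ from the field $\C(z, \wp_1(z),\wp_1'(z),\dots)$, which is a consequence of the relevant functional transcendence result.

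The main obstacle I expect is the bookkeeping in the predimension inequality in the \emph{mixed} case — handling $\exp$-terms and several $\wp$-terms (with possibly non-commensurable lattices and with CM versus non-CM curves contributing different endomorphism rings) simultaneously, and extracting from the equality case precisely the statement "$g$ comes from some $f_i$ by isogeny and Schwartz reflection" rather than some weaker correspondence. In particular, one has to be careful that Schwartz reflection (complex conjugation of the lattice) is the only extra operation that arises, which uses that the ambient structure is over $\R$ and so is closed under $z \mapsto \bar z$; ruling out any further "unexpected" definability amounts to showing the predimension is strictly positive in all the remaining configurations, and that is where the functional transcendence theorems must be applied with some care about the rank of the relevant groups.
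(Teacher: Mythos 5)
Your proposal correctly identifies the two main ingredients — a predimension function and Ax-type functional transcendence theorems — and the easy direction (isogeny and Schwarz reflection preserve local definability) matches the paper's Proposition~\ref{ISR-def}. However, the mechanism you propose for the hard direction is not what the paper does, and in fact is precisely the route the authors report they could not make work. You want to invoke a model-completeness statement to turn ``$g$ is locally definable in $\RPRF$'' into an explicit system of polynomial equations relating the real and imaginary parts of $g$ to values of the $f_i$, $\exp$ and their derivatives, and then feed those equations into an Ax--Schanuel argument. That is essentially Bianconi's method, and the introduction says explicitly that adapting it here ``ran into technical difficulties due to the nature of the differential equations satisfied by the real and imaginary parts of $\wp$-functions.'' The real and imaginary parts of $\wp$ satisfy messier PDEs than those of $\exp$, and extracting the needed algebraic relation from model completeness is exactly where the difficulty sits; your sketch glosses over it.

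What the paper does instead is replace the model-completeness step by Wilkie's local definability theory for holomorphic functions. One works with the holomorphic closure pregeometry $\hcl_\F$ and, crucially, with its characterization in terms of $\F$-derivations (Fact~\ref{hcl derivations}): $b\in\hcl_\F(A)$ iff every $\F$-derivation of $\C$ vanishing on $A$ kills $b$. This converts the definability hypothesis directly into a statement about the space $\Der_\F(\C/A)$, i.e.\ about linear relations among the differential forms $f'(a)\,da - d\,f(a)$, which is exactly the form of statement that Kirby's Ax-type result \cite[Prop.~3.7, Thm.~3.8]{TEDESV} controls. The predimension $\delta_\F=\td-\grk_\F$ is then shown to compute $\hcl_\F$-dimension on strong extensions (Propositions~\ref{closed implies strong} and \ref{predim dim}, via extension of derivations in Propositions~\ref{delta = 0 extension}--\ref{extend derivations}), and Theorem~\ref{interdefinability} follows from the submodularity computation in Theorem~\ref{main technical result}. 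So the gap in your proposal is the bridge from ``locally definable'' to a situation where the transcendence theorem bites: you need Wilkie's derivation-theoretic characterization of $\hcl_\F$, not model completeness plus explicit equations, to avoid the very obstruction the authors flag.
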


Ax's theorem has been extended to include $\wp$-functions by the second author \cite{EWE,TEDESV} following work of Ax himself \cite{Ax72a}, and so one might attempt to adapt Bianconi's method to prove our theorem. This might be possible, but our efforts in this direction ran into technical difficulties due to the nature of the differential equations satisfied by the real and imaginary parts of $\wp$-functions.

Instead, we follow the method of predimensions introduced by Hrushovski \cite{Hru93} in his construction of new strongly minimal sets. As far as we are aware, this is the first application of these ideas to definability in expansions of the real field. The connection between predimensions and Schanuel conditions was made by Zilber, as part of his work on the model theory of complex exponentiation.

For the proof, we first recall a pregeometry introduced by Wilkie \cite{Wilkie08} which arises from real definability with complex functions, and we give Wilkie's characterization of this pregeometry in terms of derivations. We then introduce a predimension function and use it to show that certain derivations can be extended, following a method from \cite{EAEF}. It is here an Ax-type result first enters the picture, in an incarnation due to the second author \cite{TEDESV}. A second use of that paper together with the results on extending derivations allows us to characterize the dimension associated to the pregeometry in terms of the predimension. The theorem then follows from some computations with this predimension. In fact, our method leads naturally to a stronger result than the theorem above (Theorem~\ref{main technical result}). It is somewhat technical so we do not state it here, but when combined with some o-minimal analysis it leads to the following.
\begin{theorem}\label{intersection semialg}
Let $\F_1$ consist of complex exponentiation and some Weierstrass $\wp$-functions and let $\F_2$ consist of Weierstrass $\wp$-functions. Suppose that none of the functions in $\F_2$ is isogenous to any $\wp$-function from $\F_1$, or isogneous to the Schwartz reflection of a $\wp$-function in $\F_1$. Then any subset of $\R^n$ which is definable (with parameters) both in $\R_{\mathrm{PR}(\F_1)}$ and in $\R_{\mathrm{PR}(\F_2)}$ is semialgebraic, that is, it is definable (with parameters) in $\Rbar$.
\end{theorem}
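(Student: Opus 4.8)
The strategy is to combine the stronger technical result (Theorem~\ref{main technical result}, whose precise statement is deferred) with classical o-minimal model theory, in particular the fact that a set definable in two o-minimal expansions of $\Rbar$ that ``share no transcendence'' must already be definable in their common reduct $\Rbar$. Concretely, I would first argue that it suffices to prove the statement for a single function $g$ locally definable in $\R_{\mathrm{PR}(\F_2)}$: any set $X \subseteq \R^n$ definable in $\R_{\mathrm{PR}(\F_2)}$ is definable using finitely many restrictions of functions built from the $\wp$-functions in $\F_2$, so by a compactness/finiteness argument we reduce to finitely generated data on the $\F_2$-side, and symmetrically on the $\F_1$-side. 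The hypothesis on isogeny and Schwartz reflection is exactly the condition under which Theorem~\ref{interdefinability} (and its strengthening) tells us that no nonalgebraic germ is locally definable in \emph{both} structures.

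**Key steps, in order.** (1) Reduce $X$ definable in $\R_{\mathrm{PR}(\F_1)} \cap \R_{\mathrm{PR}(\F_2)}$ to the situation where $X$ is $\emptyset$-definable in each of two o-minimal expansions $\R_1 = \RPRF[\F_1']$, $\R_2 = \RPRF[\F_2']$ for finite $\F_i' \subseteq \F_i$, after adding the relevant parameters to the language. (2) Form the pregeometry-theoretic heart: using Wilkie's pregeometry recalled earlier and the predimension characterization of its associated dimension, show that a germ of an analytic function locally definable in $\R_1$ and the germ of one locally definable in $\R_2$ can only coincide (up to semialgebraic data) if the underlying $\wp$-functions are related by isogeny and Schwartz reflection --- this is precisely the content extracted from Theorem~\ref{main technical result}. (3) Deduce from (2) that the ``shared'' local structure of $\R_1$ and $\R_2$ is purely semialgebraic, i.e. any analytic cell decomposition data witnessing $X$ in $\R_1$ and in $\R_2$ can be taken to live over $\Rbar$. (4) Invoke an o-minimal transfer/interpolation argument: a set definable in both $\R_1$ and $\R_2$, whose defining data on each side reduces to the common semialgebraic core, is semialgebraic. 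This last step is where one genuinely uses o-minimality (cell decomposition, the good behaviour of definable families, and the fact that $\R_{\mathrm{PR}(\F)}$ is o-minimal, presumably established in the body of the paper or cited).

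**The main obstacle.** The hard part is step (4): passing from ``every \emph{germ} locally definable in both structures is semialgebraic'' to ``every \emph{set} definable in both structures is semialgebraic.'' A priori a set could be built from pieces each of which is individually non-semialgebraic but whose defining functions on the two sides are different, with the set nonetheless coinciding. Ruling this out requires a careful o-minimal argument: take a cell decomposition of $X$ in $\R_1$ adapted to also being a (finite union of cells of a) decomposition in $\R_2$, look at a cell of maximal dimension, and analyse the analytic functions whose graphs bound it. One shows these functions are locally definable in both structures, hence by step (2)--(3) semialgebraic, and then one descends by induction on dimension. Making the simultaneous cell decomposition and the bounding-function analysis precise --- in particular handling the fact that ``locally definable'' genuinely means local, so one works chart-by-chart and must patch --- is the technical crux. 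The transcendence input (Ax-type theorems, the predimension inequalities) has all been absorbed into Theorem~\ref{main technical result}; what remains is genuinely an exercise in o-minimality, but a nontrivial one.
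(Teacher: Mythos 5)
Your proposal has the right overall shape (reduce to an analytic cell, look at the bounding/graphing functions, induct on dimension, and ultimately feed a holomorphic germ into Theorem~\ref{main technical result}), and you correctly identify the crux --- passing from a germ statement to a statement about sets. But the crux is left unresolved, and the specific device the paper uses to resolve it is absent from your sketch. The paper does not attempt to ``adapt'' a cell decomposition taken in $\R_{\F_1}$ so that it is compatible with $\R_{\F_2}$ (it is unclear this is even possible: a decomposition constructed in $\R_{\F_1}$ need not be definable in $\R_{\F_2}$ at all). Instead, it applies Lemma~\ref{analytic cell decomp} in the structure $\tuple{\Rbar;X}$, which is a \emph{common reduct} of both $\R_{\F_1}$ and $\R_{\F_2}$. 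Because the decomposition is carried out using only $\Rbar$ and the set $X$ itself, the resulting cells --- and the analytic functions whose graphs define them --- are automatically definable in both structures simultaneously. This one move dissolves exactly the worry you raise about ``different defining functions on the two sides.''

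Two further ingredients needed to feed those cell-bounding functions into Theorem~\ref{main technical result} are also missing from your sketch. First, a cell-graphing function $f:C'\to\R$ lives on a cell $C'$, not an open set, so one needs Lemma~\ref{retraction} to produce a definable analytic retraction $\theta:U'\to C'$ with $U'$ open, and then one works with $f\circ\theta$. Second, $f\circ\theta$ is a \emph{real}-analytic function, while Theorem~\ref{main technical result} concerns holomorphic functions; Proposition~\ref{holomorphic extensions} supplies the definable holomorphic extension to an open subset of $\C^n$ (away from a lower-dimensional definable exceptional set, which the induction absorbs). Only after these two reductions can one conclude that some open restriction of the holomorphic extension is definable in $\Rbar$, hence semialgebraic, hence algebraic by holomorphy, and finally that $f$ itself is algebraic. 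Without the common-reduct trick, the retraction, and the complexification step, the argument you outline does not close.
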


We do not see how this sort of general result could be obtained from Bianconi's method.

Ax-style functional transcendence results have recently been shown to be very useful in applications of o-minimality to number theory, for example by Pila \cite{pilaao}. Our results could be thought of as saying that certain functions are not only algebraically independent but in fact are independent with respect to definability in a certain expansion of the real field. In the short final section we give an example showing how this sort of independence could be applied to counting certain points on certain analytic curves.

\section{Local definability}\label{locdef section}
In this brief section we explain the notion of local definability. Except where otherwise stated (such as in the statements of our main theorems), \emph{definable} means definable \emph{without parameters}.

\begin{defn}
Let $U \subs \R^n$ be an open subset and $f: U \to \R$ a function. We say that $f$ is \emph{locally definable}  with respect to an expansion $\mathcal{R}$ of $\Rbar$ if for every $a \in U$ there is a neighbourhood $U_a$ of $a$ such that $f\restrict{U_a}$ is definable.

More generally, if $M$ is a definable manifold (we will only need the cases where $M$ is affine space or projective space), then a map $f: U \to M$ is \emph{locally definable} if for each $a \in U$ there is a neighbourhood $U_a$ of $a$, an open set $W \subs M$ containing $f(U_a)$ and a definable chart $\phi: W \to \R^m$ such that each of the components of the composite $\phi \circ f\restrict{U_a}$ is definable.

In particular, identifying $\C$ with $\R^2$, a complex function is locally definable if and only if its real and imaginary parts are locally definable.
\end{defn}

\begin{defn}
Let $U \subs \R^n$ be an open subset and $f: U \to \R$ a function. A \emph{proper restriction} of $f$ is a restriction $f\restrict{\Delta}$ where $\Delta = (r_1,s_1) \cross \cdots \cross (r_n,s_n)$ is an open rectangular box with rational corners such that the closure $\bar\Delta$ of $\Delta$ is contained within $U$. If $a \in \Delta$ we say that $f\restrict{\Delta}$ is a \emph{proper restriction of $f$ around $a$}.

Given a set $\F$ of functions, we write $\mathrm{PR}(\F)$ for the set of all proper restrictions of functions in $\F$, and $\RPRF$ for the expansion of the real field $\Rbar$ by the graphs of all of the functions in $\mathrm{PR}(\F)$.
\end{defn}
We can consider \Ran\ to be the expansion of \Rbar\ by all the proper restrictions of all real-analytic functions. Usually it is defined as the expansion by all restrictions to the closed unit cube of functions which are analytic on an open neighbourhood of the cube. However, the two definitions are equivalent in the sense of giving the same definable sets.

We now establish that $\RPRF$ is indeed the smallest expansion of $\Rbar$ in which all functions from $\F$ are locally definable.
\begin{lemma}\label{local = pointwise}
A function $f: U \to \R$ is locally definable in an expansion $\mathcal{R}$ of $\Rbar$ if and only if all of its proper restrictions are definable in $\mathcal{R}$.
\end{lemma}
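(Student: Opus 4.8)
The plan is to prove the two implications separately, the reverse one being almost immediate from the openness of $U$ and the forward one resting on the compactness of the closure of a box.

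For the ``if'' direction, assume every proper restriction of $f$ is definable and fix $a \in U$. Since $U$ is open there is $\varepsilon > 0$ with $\bar B(a,\varepsilon) \subs U$, and then one can choose rationals $r_i < a_i < s_i$ so that the box $\Delta = (r_1,s_1)\cross\cdots\cross(r_n,s_n)$ satisfies $\bar\Delta \subs B(a,\varepsilon) \subs U$. Then $f\restrict{\Delta}$ is a proper restriction of $f$ around $a$, hence definable by hypothesis, and $\Delta$ is an open neighbourhood of $a$; so $f$ is locally definable in $\mathcal{R}$.

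For the ``only if'' direction, assume $f$ is locally definable and let $f\restrict{\Delta}$ be an arbitrary proper restriction, with $\Delta = (r_1,s_1)\cross\cdots\cross(r_n,s_n)$ and $\bar\Delta \subs U$. For each $a \in \bar\Delta$ there is an open neighbourhood $U_a \subs U$ of $a$ with $f\restrict{U_a}$ definable. The sets $U_a$, for $a \in \bar\Delta$, form an open cover of the compact set $\bar\Delta$, so finitely many of them, say $U_{a_1},\ldots,U_{a_k}$, already cover $\bar\Delta$, and in particular cover $\Delta$. The box $\Delta$ is semialgebraic, hence definable without parameters in $\Rbar$ and so in $\mathcal{R}$, and therefore
\[ \graph\bigl(f\restrict{\Delta}\bigr) \;=\; \bigcup_{j=1}^{k} \Bigl( \graph\bigl(f\restrict{U_{a_j}}\bigr) \cap (\Delta \cross \R) \Bigr) \]
is definable in $\mathcal{R}$, being a finite union of definable sets. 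Hence $f\restrict{\Delta}$ is definable.

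I do not expect a genuine obstacle here: the two points needing care are precisely that a proper restriction lives on a box whose closure is compact and contained in $U$ (so that the local definability data at finitely many points of $\bar\Delta$ suffices, via Heine--Borel) and that the box $\Delta$ is itself definable. The analogous statement for maps into a definable manifold follows in the same way, applying the above componentwise to $\phi \circ f\restrict{U_a}$ for the definable charts $\phi$ supplied by the definition of local definability, so that in particular a complex function is locally definable if and only if its real and imaginary parts are.
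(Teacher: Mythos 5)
Your argument is correct and follows essentially the same route as the paper's: the ``if'' direction is the observation that a rational box around $a$ with closure in $U$ is itself a suitable $U_a$, and the ``only if'' direction uses compactness of $\bar\Delta$ to extract a finite subcover and the definability of $\Delta$ to glue the pieces. You merely make explicit the finite union of graphs that the paper leaves implicit.
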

\begin{proof}
First suppose that all the proper restrictions of $f$ are definable and $a\in U$. Then there is a rational box $\Delta$ around $a$ whose closure is contained in $U$, so we can take $U_a = \Delta$. For the converse, let $\Delta$ be a rational box whose closure $\bar\Delta$ is contained in $U$. Then $\class{U_a \cap \bar\Delta}{a \in \bar{\Delta}}$ is an open cover of a topologically compact set, so there is a finite subcover and since $\Delta$ is definable, we see that the graph of $f\restrict\Delta$ is definable.
\end{proof}
Wilkie \cite{Wilkie08} uses definability of all proper restrictions as his definition of local definability.

\section{The holomorphic closure}

Recall that in any o-minimal expansion of $\Rbar$, in particular in the structures $\RPRF$, the definable closure $\dcl_\F$ is a pregeometry on \R. It is characterised by $b \in \dcl_\F(A)$ if and only if there is a function $f$ definable in $\RPRF$ and a tuple $a$ from $A$ such that $f(a) = b$. Since the structures $\RPRF$ have analytic cell decomposition (this follows easily from Gabrielov's theorem \cite{gabrielovcomplement}, see also Lemma \ref{analytic cell decomp} for a more general result) and we can assume that $a$ is generic in the sense of $\dcl_\F(A)$, we can take the function $f$ to be real-analytic and defined on some open subset $U$ of $\R^n$, for some $n \in \N$. Wilkie made an analogous definition in the complex case.

\begin{defn}
Given $\F$ and a subset $A \subs \C$, we define the \emph{holomorphic closure} $\hcl_\F(A)$ of $A$ by $b \in \hcl_\F(A)$ if and only if there is $n \in \N$, an open subset $U \subs \C^n$, a definable holomorphic function $f : U \to \C$, and a tuple $a \in A^n \cap U$ such that $b = f(a)$.
\end{defn}

\begin{fact}
The operator $\hcl_\F: \powerset \C \to \powerset \C$ is a pregeometry on $\C$. Furthermore, if $\F$ is countable then the holomorphic closure of a countable set is countable.
\end{fact}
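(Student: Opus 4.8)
First I would verify the four easy pregeometry axioms directly from the definition. Reflexivity holds by taking the witnessing function to be a coordinate projection, which is a definable holomorphic function; monotonicity and finite character are immediate, since a tuple from $A$ is a tuple from any superset and each witnessing function has only finitely many arguments; and transitivity holds because a composite of definable holomorphic functions, restricted to the open set on which the composition is defined, is again definable and holomorphic (one concatenates the argument tuples of the inner functions). The countability clause is equally quick: if $\F$ is countable then, there being only countably many rational boxes of each arity, $\mathrm{PR}(\F)$ is countable and so $\RPRF$ has a countable language; hence for each $n$ there are only countably many definable holomorphic functions defined on open subsets of $\C^n$, and evaluating them on the countably many tuples from a countable set $A$ gives only countably many values.

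The content of the statement is the exchange property, and the plan is to reduce it to the following \emph{local inversion lemma}: if $F\colon U\to\C$ is a definable holomorphic function on an open $U\subs\C^{n+1}$ and $(\abar,\gamma)\in U$ satisfies $\d{F}{z}(\abar,\gamma)\neq 0$, then, writing $\beta=F(\abar,\gamma)$, there are a definable open $\tilde U\ni(\abar,\beta)$ and a definable holomorphic $h\colon\tilde U\to\C$ with $h(\abar,\beta)=\gamma$ and $F(\xbar,h(\xbar,w))=w$ on $\tilde U$. Granting this, suppose $b\in\hcl_\F(A\cup\{c\})\setminus\hcl_\F(A)$; we may assume $c\notin\hcl_\F(A)$, as otherwise $c\in\hcl_\F(A)\subs\hcl_\F(A\cup\{b\})$ already. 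Fix a witness $b=f(\abar,c)$ with $f$ definable holomorphic and $\abar$ a tuple from $A$, and put $g(z)=f(\abar,z)$, holomorphic on a connected open neighbourhood of $c$. If $g$ were constant near $c$ then $b=f(\abar,q)$ for some rational point $q$ near $c$, and as $q$ is definable without parameters this gives $b\in\hcl_\F(A)$, a contradiction; so $g$ is non-constant and $m:=\ord_c(g-b)$ is finite and $\geq 1$. If $m\geq 2$ then $\partial_z^{m-1}f(\abar,c)=g^{(m-1)}(c)=0$ while $\partial_z^{m}f(\abar,c)=g^{(m)}(c)\neq 0$, so the lemma applied to the definable holomorphic function $(\xbar,z)\mapsto\partial_z^{m-1}f(\xbar,z)$ at $(\abar,c)$, where it takes the value $0$, exhibits $c$ as the value at $\abar$ of a definable holomorphic function, i.e.\ $c\in\hcl_\F(A)$, contrary to assumption. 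Hence $m=1$, that is $\d{f}{z}(\abar,c)\neq 0$, and the lemma applied to $f$ at $(\abar,c)$ gives $c=h(\abar,b)\in\hcl_\F(A\cup\{b\})$, as required.

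The remaining step, proving the lemma, is the one I expect to be the main obstacle. The holomorphic implicit function theorem produces a holomorphic solution $h_0$ of $F(\xbar,h_0(\xbar,w))=w$ on \emph{some} open neighbourhood of $(\abar,\beta)$, but neither that neighbourhood nor $h_0$ need be definable without parameters; removing the parameters is the crux. My plan is to pass to the definable set $R=\{(\xbar,z)\in U:\d{F}{z}(\xbar,z)\neq 0\}$ and the definable holomorphic map $\Psi\colon R\to\C^{n+1}$, $\Psi(\xbar,z)=(\xbar,F(\xbar,z))$, whose Jacobian is block triangular with diagonal blocks the identity and $\d{F}{z}$, hence invertible on $R$, so that $\Psi$ is an open map and a local homeomorphism, with $\Psi(\abar,\gamma)=(\abar,\beta)$; its fibres are discrete, hence finite by o-minimality. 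Using cell decomposition and definable triviality for $\RPRF$, one should then obtain, over a suitable definable open set containing $(\abar,\beta)$, a definable presentation of $\Psi$ as a finite cover and hence a definable continuous local section $\sigma$ of $\Psi$ with $\sigma(\abar,\beta)=(\abar,\gamma)$ (the section through the relevant sheet). Since $\Psi$ is locally injective at $(\abar,\gamma)$, this $\sigma$ must coincide near $(\abar,\beta)$ with the holomorphic section coming from $h_0$, so $\sigma$ --- and hence its last coordinate $h$ --- is holomorphic on a definable open neighbourhood of $(\abar,\beta)$, as the lemma asserts. The delicate point throughout is parameter-freeness: the set $\tilde U$ and the choice of sheet must be defined by formulas not mentioning $\abar$, $\gamma$ or $\beta$, which is why one is forced to use the triviality theorem rather than a naive local construction around $(\abar,\beta)$; alternatively, this fact is essentially established in Wilkie's work \cite{Wilkie08}, which could be cited directly.
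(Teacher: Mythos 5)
The paper does not prove this Fact at all: it cites Wilkie's \cite[Theorem~1.10]{Wilkie08} for the pregeometry claim and dismisses the countability claim as immediate. So your closing sentence --- that the hard part ``is essentially established in Wilkie's work, which could be cited directly'' --- describes exactly what the authors do. What you have written in between is a genuine attempt at a direct proof, and it is worth evaluating on its own terms.

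The easy parts are correct. Reflexivity, monotonicity, finite character and transitivity go through just as you say (for transitivity one only needs to note that the domain of the composite, being the continuous preimage of an open set, is open and $\emptyset$-definable). The countability argument is also correct: a countable $\F$ gives a countable language, hence countably many $\emptyset$-definable holomorphic functions on open subsets of each $\C^n$, and evaluating these at the countably many tuples from a countable $A$ yields a countable set. Your reduction of exchange to the ``local inversion lemma'' is the most interesting part and is correct as written: the constant case is handled by evaluating at a nearby Gaussian rational, the $m\ge 2$ case by applying the lemma to $\partial_z^{m-1}f$ at the value $0$, and the $m=1$ case by applying the lemma to $f$ itself.

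The gap is in the lemma, and it is precisely where you flag uncertainty. The difficulty is not merely that $\tilde U$ and $h$ must be $\emptyset$-definable in the abstract; it is that they must be open and contain the specific (non-$\emptyset$-definable) point $(\abar,\beta)$. The cell decomposition / definable triviality strategy does not automatically deliver this: the fibre cardinality of $\Psi$ can jump, and $(\abar,\beta)$ may lie on the frontier of the locus where it is locally constant, or more generally in a cell of positive codimension of any $\emptyset$-definable cell decomposition of $\Psi(R)$. In that situation no cell of the decomposition is an open neighbourhood of $(\abar,\beta)$, so ``the section through the relevant sheet'' is not yet defined where you need it. (A one-variable example with a non-degenerate critical value already exhibits the jump in fibre size over a boundary point of $\Psi(R)$.) Overcoming exactly this is the substance of Wilkie's proof of Theorem~1.10; it is not something cell decomposition plus definable choice hands you for free. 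So, as a self-contained proof, the proposal is incomplete at its central step; as an account of what needs to be proved and why it is delicate, it is accurate, and the paper's move of citing Wilkie directly is the appropriate shortcut.
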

\begin{proof}
The first part is from Theorem~1.10 of \cite{Wilkie08}. The observation about countability is immediate.
\end{proof}

Obviously if a holomorphic function $f: U \to \C$ is locally definable in $\RPRF$ then we have $f(a) \in \hcl_\F(a)$ for each $a \in U$. The converse is partly true.

\begin{prop}\label{gen def prop}
Let $\F$ be a countable set of holomorphic functions. Suppose $U$ is an open subset of $\C^n$ and $f: U \to \C$ is a holomorphic function such that for all $a \in U$ we have $f(a) \in \hcl_\F(a)$. Then $f$ is locally definable almost everywhere in $U$. More precisely, there is an open subset $U'$ of $U$ such that $U \minus U'$ has measure zero and $f\restrict{U'}$ is locally definable with respect to $\RPRF$. Furthermore, if $n=1$ then $U'$ can be taken such that $U \minus U'$ is a countable set.
\end{prop}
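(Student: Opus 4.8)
The plan is to argue by induction on $n$, starting with the case $n=1$ which already illustrates the main mechanism. Fix $\F$ countable. For each $a \in U$, the hypothesis gives some $m_a \in \N$, an open $V_a \subs \C^{m_a}$, a definable holomorphic $g_a : V_a \to \C$ and a point $\alpha_a \in A^{m_a} \cap V_a$ — wait, more precisely with $a$ in place of $A$: the relevant data is that $f(a) \in \hcl_\F(a)$, so there is $k$, an open $V \subs \C^{k}$, a definable holomorphic $g : V \to \C$ with $(a,\ldots) \in V$ (i.e.\ the first coordinate is $a$, or more honestly we should think of $f(a) = g(a)$ after reducing to one variable, since $\hcl_\F$ is generated by one-variable data and composition). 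The point is that for a fixed such $g$, the set of $a \in U$ for which $f(a) = g(a)$ (on the overlap of domains) is an analytic subset of an open subset of $U$; since there are only countably many definable holomorphic functions $g$ (as $\F$ is countable, hence $\RPRF$ is a countable language and there are countably many formulas), $U$ is covered by countably many such analytic loci. By the Baire category theorem applied to the open set $U$, at least one of these analytic loci has nonempty interior, hence by the identity theorem $f$ agrees with some fixed definable holomorphic $g$ on a nonempty open subset of $U$; there $f$ is locally definable.

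To upgrade this from "somewhere" to "almost everywhere", I would iterate. Let $U_1$ be the union of all open subsets of $U$ on which $f$ is locally definable; by the previous paragraph $U_1 \neq \emptyset$, and $U_1$ is open. I claim $U \minus U_1$ has empty interior: if some open box $B \subs U \minus U_1$ existed, applying the previous paragraph to $f\restrict{B}$ would produce an open subset of $B$ on which $f$ is locally definable, contradicting the definition of $U_1$. Thus $U \minus U_1$ is a closed set with empty interior. In the one-variable case $n=1$, a closed subset of $\C$ with empty interior need not be countable (Cantor set), so a little more care is needed: instead run the argument relative to the analytic stratification. Since each locus $\{a : f(a)=g(a)\}$ is, near any point, either all of a neighbourhood or a discrete set of points, and there are countably many $g$'s, the complement $U \minus U_1$ is contained in a countable union of discrete sets, hence is countable. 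This gives the sharpened conclusion for $n=1$. For general $n$, I would argue that $U \minus U_1$ is a closed analytic-type set with empty interior: more carefully, using analytic cell decomposition (the structures $\RPRF$ admit analytic cell decomposition, as noted in the excerpt via Gabrielov's theorem), one stratifies a neighbourhood of each point of $U \minus U_1$ into finitely many analytic cells, and the top-dimensional cells must lie in $U_1$, so $U \minus U_1$ is locally contained in a finite union of cells of dimension $< 2n$ (real dimension), hence has Lebesgue measure zero. Set $U' = U_1$.

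The main obstacle I expect is making precise and correct the reduction "$f(a) \in \hcl_\F(a)$ implies $f(a) = g(a)$ for one of countably many fixed definable holomorphic functions $g$, uniformly in $a$ on an open set". The subtlety is that a priori the witnessing function, its arity, and its domain all depend on $a$; one needs that the total supply of definable holomorphic functions (of all arities) is countable — which holds because $\F$ is countable, so the language of $\RPRF$ is countable and there are only countably many first-order formulas, each defining at most one holomorphic function on any given connected open set — and that for fixed $g$ the equalizer $\{a \in U : a \in \dom(g)\text{-slice},\ f(a) = g(a,\ldots)\}$ is a genuine analytic subset of an open piece of $U$, so that the identity theorem and Baire category apply. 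Once that bookkeeping is set up, the iteration and the measure/countability estimate for the exceptional set are routine; the genericity remark from the start of the section (that one may take the witnessing function real-analytic, and here holomorphic) is what licenses working with honest analytic equalizers rather than arbitrary definable sets.
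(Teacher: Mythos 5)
Your overall strategy is the same as the paper's: enumerate the countably many pairs $(U_i,g_i)$ of a definable connected open $U_i\subseteq U$ and a definable holomorphic $g_i:U_i\to\C$, note that $f(a)\in\hcl_\F(a)$ forces $g_i(a)=f(a)$ for some such pair with $a\in U_i$, let $U'$ be the set of $a$ where some such $g_i$ agrees with $f$ on all of $U_i$, and argue that the exceptional set is a countable union of ``bad'' equalizers $V_i=\{a\in U_i\setminus U': g_i(a)=f(a)\}$, each small because $g_i-f\not\equiv 0$ on the connected set $U_i$. Your closing remark correctly identifies the arity bookkeeping (the witness in the definition of $\hcl_\F$ may live in $\C^m$ for various $m$, but after pre-composing with projections one may always take a witness in $n$ variables on a neighbourhood of $a$), which the paper passes over in silence. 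The Baire category step is superfluous, though harmless: you do not need to know $U_1\neq\emptyset$ first and then iterate; once you have exhibited $U\setminus U'$ as $\bigcup_{i\in J}V_i$, the measure estimate does everything and nonemptiness of $U'$ falls out.

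There is, however, a genuine gap in the general-$n$ measure-zero step. You propose to invoke analytic cell decomposition of $\RPRF$ and stratify a neighbourhood of each point of $U\setminus U_1$. This cannot work as stated. The set $U\setminus U_1$ is \emph{not} definable in $\RPRF$ -- it depends on the arbitrary holomorphic $f$, which at this stage we do not know to be definable -- and neither are the equalizers $\{a: g_i(a)=f(a)\}$. Cell decomposition is a statement about definable sets, so it says nothing about $U\setminus U_1$. Moreover, if you only decompose an open ball around a point of $U\setminus U_1$, the decomposition consists of the ball itself (one open cell), which manifestly does not lie in $U_1$; the claim that ``the top-dimensional cells must lie in $U_1$'' is false without further input. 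The correct tool here is purely complex-analytic, not o-minimal: for $i\in J$, the function $g_i-f$ is a nonzero holomorphic function on the connected open set $U_i$, so its zero set is a proper complex-analytic subset, of complex dimension $<n$, and such sets have Lebesgue measure zero in $\C^n$. A countable union of measure-zero sets is measure zero, and in the case $n=1$ the zero set is discrete (hence countable), which is exactly what the paper does. Replacing the appeal to o-minimal cell decomposition by this standard fact about zero sets of holomorphic functions repairs your argument and brings it into line with the paper's.
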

\begin{proof}
Suppose we have $f(a) \in \hcl_\F(a)$ for each $a \in U$. Enumerate all the pairs $(U_i,g_i)_{i \in \N}$ such that $U_i$ is a definable connected open subset of $U$ and $g_i :U_i \to \C$ is a definable holomorphic function. For each $a \in U$ there is $i(a) \in \N$ such that $a \in U_{i(a)}$ and $g_{i(a)}(a) = f(a)$.  Let $U'$ be the subset of $U$ consisting of those $a$ such that we can choose $i(a)$ with $g_i = f\restrict{U_i}$. Then $U'$ is open in $U$ and $f\restrict{U'}$ is locally definable.

Now let $J = \class{i \in \N}{g_i \neq f\restrict{U_i}}$. For each $i \in J$, let $V_i = \class{a \in U_i \minus U'}{g_i(a) = f(a)}$. Then $V_i$ is a proper closed subset of $U_i \minus U'$, and furthermore since it is locally the zero set of the holomorphic function $g_i - f$, it is an analytic set and has a well-defined complex dimension. Since $g_i - f$ does not vanish in the neighbourhood of any point, this dimension is strictly less than $n$, and so $V_i$ has measure zero in $U$. Thus $\bigcup_{i \in J} V_i$ has measure zero, and we note that $U' = U \minus \bigcup_{i \in J} V_i$.

If $n=1$ then the complex dimension of each $V_i$ must be $0$, so it must be a countable set. Hence $U \minus U'$ is countable.
\end{proof}

It seems not to be possible to strengthen the conclusion to get $f$ actually locally definable everywhere in $U$. While we do not have a counterexample, we do have an idea of how to produce one. Let $f:\C \to \C$ be a holomorphic function which is suitably generic, for example a Liouville function as defined by Wilkie \cite{Wilkie05}. Let $\F=\{f\}$, let $b \in \C$, set $U = \C \minus \{b\}$ and let $g = f\restrict U$. Let $h : \C \to \C$ be the constant function with value $f(b)$ and let $\G = \{g,h\}$. Then for every $a \in \C$ we have $f(a) \in \hcl_\G(a)$. However, there is no obvious way to define $f$ in a neighbourhood of $b$ and indeed we believe that if $f$ and its derivatives satisfy the transcendence property given in \cite{Zilber02tgfd} for a \emph{generic function with derivatives} then the predimension method used in this paper could be used to demonstrate that $f$ is not definable around $b$ in $\R_\G$. In this case the point $b$ is not generic in $\R_\G$. However, by making a more careful choice of functions $f$ and $h$ it seems likely we could get the same behaviour at a point $b$ which is generic in $\R_\G$.

\section{Derivations}
Let $A$ be a subfield of $\C$ and let $M$ be an $A$-vector space. Let $\F$ be a set of holomorphic functions $f: U \to \C$ where $U$ is an open subset of $\C$ which may depend on $f$.

\begin{defn}
A \emph{derivation} from $A$ to $M$ is a function $\ra{A}{\partial}{M}$ such that for each $a,b \in A$ we have
\begin{enumerate}[(i)]
\item $\partial(a+b) = \partial a + \partial b$; and
\item $\partial(ab) = a\partial b + b \partial a$.
\end{enumerate}
It is an \emph{$\F$-derivation} if and only if also for each $f \in \F$ and each $a \in A \cap \dom f$ such that $f(a) \in A$ and $f'(a) \in A$ we have $\partial f(a) = f'(a) \partial a$.
\end{defn}
(Wilkie gives the definition also when $\F$ can contain functions of several variables. We only need the 1-variable case.)

Given a subset $C \subs A$, there is an $\F$-derivation from $A$ which is universal amongst all $\F$-derivations from $A$ which vanish on $C$, written
\[\ra{A}{d}{\Omega_\F(A/C)}\]
$\Omega_\F(A/C)$ is constructed as the $A$-vector space generated by symbols $\class{da}{a\in A}$ subject to the relations which force $d$ to be an $\F$-derivation. In the case where $\F = \emptyset$ this is just the usual universal derivation $\ra{A}{d}{\Omega(A/C)}$ to the module of K\"ahler differentials. See for example \cite[Chapter~16]{Eisenbud} for more details.

We write $\Der_\F(A)$ for the $A$-vector space of all $\F$-derivations from $A$ to $A$, and $\Der_\F(A/C)$ for the subspace of all $\F$-derivations which vanish on $C$.

The connection between \F-derivations and the holomorphic closure with respect to $\F$ comes from the following result.
\begin{fact}\label{hcl derivations}
Let $A \subs \C$ and $b\in\C$. Then $b \in \hcl_\F(A)$ if and only if for every $\partial \in \Der_\F(\C/A)$ we have $\partial b = 0$.

Furthermore, $b_1,\ldots,b_n \in \C$ form an $\hcl_\F$-independent set over $A$ if and only if there are $\partial_1,\ldots,\partial_n \in \Der_\F(\C/A)$ such that
\[\partial_i b_j = \begin{cases} 1 & \mbox{ if } \quad i = j \\ 0 & \mbox{ if } \quad i \neq j. \end{cases}\]
\end{fact}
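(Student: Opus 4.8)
The plan is to prove both statements by working with the universal $\F$-derivation $\ra{\C}{d}{\Omega_\F(\C/A)}$ and identifying $\Der_\F(\C/A)$ with $\Hom_\C(\Omega_\F(\C/A),\C)$. The first claim is the key one; the second will follow by a straightforward linear-algebra argument once the first is in place.

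For the first claim, suppose $b \in \hcl_\F(A)$, so there is a definable holomorphic $f : U \to \C$ with $U \subs \C^n$ open, and a tuple $a = (a_1,\ldots,a_n) \in A^n \cap U$ with $f(a) = b$. The essential input is that definable holomorphic functions satisfy the chain rule with respect to any $\F$-derivation: for $\partial \in \Der_\F(\C/A)$ one should have $\partial(f(a)) = \sum_{i} \frac{\partial f}{\partial x_i}(a)\,\partial a_i$. Since each $a_i \in A$, we get $\partial a_i = 0$, hence $\partial b = 0$. To justify the chain rule for $\F$-definable functions (as opposed to just compositions of functions from $\F$ and polynomials), I would appeal to Wilkie's analysis in \cite{Wilkie08}: the point is that the holomorphic closure is generated, as a pregeometry, by $\F$ together with field operations, so that $b = f(a)$ can be witnessed by a finite sequence $b_0,\ldots,b_m = b$ of elements, each obtained from earlier ones and from $A$ by a field operation or by applying some $h \in \F$ (with $h'$ of the argument also in the field generated so far); applying $\partial$ along this sequence and using the derivation axioms together with the $\F$-derivation condition $\partial h(c) = h'(c)\partial c$ gives $\partial b = 0$ by induction. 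Conversely, if $b \notin \hcl_\F(A)$, I need to produce $\partial \in \Der_\F(\C/A)$ with $\partial b \neq 0$. Here is where the universal object does the work: since $\hcl_\F$ is a pregeometry (the cited Fact), $b \notin \hcl_\F(A)$ means $b$ is $\hcl_\F$-independent from $A$, and one shows that $db \neq 0$ in $\Omega_\F(\C/A)$ — indeed that $db$ can be extended to part of a basis, or at least that there is a $\C$-linear functional $\lambda : \Omega_\F(\C/A) \to \C$ with $\lambda(db) \neq 0$. Composing, $\partial := \lambda \circ d$ is an $\F$-derivation from $\C$ to $\C$ vanishing on $A$ (since $d$ vanishes on $A$) with $\partial b \neq 0$.

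The point needing care is precisely the claim that $b \notin \hcl_\F(A)$ forces $db \notin \sum_{a \in A}\C\,da$ inside $\Omega_\F(\C/A)$, equivalently that the image of $\hcl_\F(A)$ under $d$ spans the ``same'' subspace — i.e. that $d$ detects the pregeometry exactly. One direction ($b \in \hcl_\F(A) \Rightarrow db \in \C\text{-span of } dA$) is the chain-rule computation above applied to the universal derivation itself. For the other direction one uses that $\Omega_\F(\C/A)$ is built by generators and relations so that any partial assignment of values $da \mapsto v_a$ respecting the relations extends to a derivation; if $b$ were $\hcl_\F$-independent from $A$ one can consistently set $db \mapsto 1$ and $da \mapsto 0$ for a transcendence-style basis extending $A \cup \{b\}$, which is exactly the kind of extension-of-derivations statement the paper announces it will prove later. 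I would therefore either cite that forthcoming machinery or, more cleanly, simply quote this equivalence directly as Theorem~1.? of \cite{Wilkie08}, since Wilkie establishes exactly this correspondence between $\hcl_\F$ and $\Der_\F$.

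For the second claim, suppose first that $b_1,\ldots,b_n$ are $\hcl_\F$-independent over $A$. For each $j$, $b_j \notin \hcl_\F(A \cup \{b_i : i \neq j\})$, so by the first part there is $\partial_j \in \Der_\F(\C/(A \cup \{b_i : i \neq j\}))$ with $\partial_j b_j \neq 0$; rescaling by $1/\partial_j b_j$ (a nonzero complex scalar times a derivation is again a derivation) gives $\partial_j b_j = 1$ and $\partial_j b_i = 0$ for $i \neq j$, as required. Conversely, given such $\partial_1,\ldots,\partial_n$, any nontrivial dependence $b_n \in \hcl_\F(A \cup \{b_1,\ldots,b_{n-1}\})$ (after reindexing) would force $\partial_n b_n = 0$ by the first part applied to $\partial_n$, which vanishes on $A \cup \{b_1,\ldots,b_{n-1}\}$ — contradicting $\partial_n b_n = 1$. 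Since the only real content is the first claim, the main obstacle is establishing the chain rule for arbitrary $\F$-definable holomorphic functions together with the surjectivity/extension property of the universal $\F$-derivation, and for that I would lean on \cite{Wilkie08} rather than reprove it.
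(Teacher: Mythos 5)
Your final move---to quote Wilkie's correspondence between $\hcl_\F$ and $\Der_\F$ directly---is exactly what the paper does: the paper's proof is just a citation of \cite[Theorem~1.10]{Wilkie08} (that $\hcl_\F$, which Wilkie calls $\mathrm{LD}$, coincides with an operator $\mathrm{\tilde D}$) together with \cite[Theorem~3.4]{Wilkie08} (that $\mathrm{\tilde D}$ coincides with the derivation-defined operator $\mathrm{DD}$, which is precisely the closure operator appearing on the right-hand side of the statement), followed by the remark that the ``furthermore'' part is immediate. Your linear-algebra derivation of the ``furthermore'' claim from the first claim is correct and fills in the detail that the paper leaves implicit.

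However, the intermediate sketch you give of the forward direction of the first claim is not correct, and it is worth being clear about why. You assert that the holomorphic closure ``is generated, as a pregeometry, by $\F$ together with field operations,'' so that $b \in \hcl_\F(A)$ can be witnessed by a finite chain $b_0,\ldots,b_m = b$ in which each term is obtained from earlier ones by a field operation or by applying a member of $\F$. This is false: $\hcl_\F$ is defined via \emph{all} $\RPRF$-definable holomorphic functions, and definable holomorphic functions need not be built up as compositions of $\F$ and field operations --- the complex implicit function theorem, for example, produces definable holomorphic functions that admit no such presentation. The nontrivial content of Wilkie's Theorem~3.4 is exactly that the larger closure $\hcl_\F$ nevertheless coincides with the a priori smaller operator $\mathrm{DD}$ defined by annihilation under $\F$-derivations; if $\hcl_\F$ were generated by compositions as you describe, this would be an elementary exercise rather than a theorem. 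Since you ultimately defer to Wilkie for the hard step (and likewise for the converse, where you need to extend a partially defined derivation to all of $\C$), your proof is salvageable, but the sketch as written gives a misleading impression that the first claim has an elementary ``chain rule plus induction'' proof.
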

\begin{proof}
Wilkie's \cite[Theorem~1.10]{Wilkie08} states that the holomorphic closure (which he denotes by $\mathrm{LD}$) is a pregeometry on $\C$ and is identical to another operator $\mathrm{\tilde D}$. Theorem~3.4 of the same paper states that $\mathrm{\tilde D}$ is identical to an operator $\mathrm{DD}$ which is defined exactly by the condition in the right hand side of our statement. The ``furthermore'' statement follows immediately.
\end{proof}


Immediately from the universal property of $\Omega_\F(A/C)$ we see that $\Der_\F(A/C)$ is the dual vector space of $\Omega_\F(A/C)$. If $f \in \F$ and $a, f(a),f'(a) \in A$ then there is a differential form $\omega = f'(a)da - d f(a) \in \Omega(A/C)$. Letting $W$ be the span of all such $\omega$, we see that $\Omega_\F(A/C)$ is the quotient of $\Omega(A/C)$ by $W$, and that $\Der_\F(A/C)$ is the annihilator of $W$ as a subspace of $\Der(F/C)$.

So to understand $\hcl_\F$ it is enough to understand the linear relations between the differential forms associated with functions $f \in \F$. This amounts to understanding the transcendence theory for the functions $f$. In the case of the exponential function and the Weierstrass $\wp$-functions, Ax's theorem and its analogues give us sufficient understanding to obtain our main results. For general holomorphic functions we do not know so much.

\section{Weierstrass $\wp$-functions}

\subsection*{Basic properties} \ \\
We give the definition of Weiestrass $\wp$-functions and the basic properties we need following Silverman \cite[pp165--171]{Silverman09}.

Given $\omega_1, \omega_2 \in \C$ which are $\R$-linearly independent, we form the lattice $\Lambda = \class{m \omega_1 + n \omega_2}{m,n \in \Z}$. We let $\Lambda' = \Lambda \minus \{0\}$ and define the Weierstrass $\wp$-function associated with $\Lambda$ to be the meromorphic function $\wp_\Lambda(z) = \frac{1}{z^2} + \sum_{\lambda \in \Lambda'}\left(\frac{1}{(z-\lambda)^2} - \frac{1}{\lambda^2}\right)$. The poles of $\wp_\Lambda$ are precisely the elements of $\Lambda$, so there is a bijective correspondence between $\wp$-functions and lattices. It can be shown that $\wp_\Lambda(z)$ satisfies the differential equation
\[\wp_\Lambda'(z)^2 = 4\wp_\Lambda(z)^3 - g_2\wp_\Lambda(z) - g_3\]
where $g_2 = 60\sum_{\lambda \in \Lambda'} \lambda^{-4}$ and $g_3 = 140\sum_{\lambda \in \Lambda'} \lambda^{-6}$.

Let $E(\C) \subs \mathbb{P}^2(\C)$ be the complex elliptic curve given by the equation
\[Y^2Z = 4X^3 - g_2XZ^2 - g_3Z^3.\]
Then the map $\exp_E: \C \to E(\C)$ given by $z \mapsto [\wp_\Lambda(z) : \wp_\Lambda'(z) : 1]$ is a homomorphism of complex Lie groups with kernel $\Lambda$, and indeed is the universal covering map of $E(\C)$.

The multiplicative stabilizer of a lattice $\Lambda$ is the set of complex numbers $a$ such that $a \Lambda \subs \Lambda$. It is always either $\Z$ or $\Z[\tau]$ for some imaginary quadratic $\tau$ and is isomorphic to the ring of algebraic endomorphisms of the corresponding elliptic curve. We write $k_\Lambda$ or $k_E$ for the field of fractions of the multiplicative stabilizer. When $k_E \neq \Q$ then $E$ is said to have complex multiplication.

\subsection*{Use of the group structure} \ \\
Now we consider collections of holomorphic functions $\F$ in which each $f \in \F$ is either a Weierstrass $\wp$-function or the complex exponential function. Recall that $\RPRF$ is the expansion of $\bar\R$  by all proper restrictions of each function in $\F$.

The function $\wp_\Lambda : \C \to \C$ has poles exactly at the lattice points, so it is holomorphic on $\C\minus \Lambda$, and thus the open boxes we consider for proper restrictions are those whose closure does not meet $\Lambda$. However, if $\wp \in \F$ then its derivative, $\wp'$ is locally definable in $\RPRF$ by standard $\epsilon$-$\delta$ arguments, and so the map $\exp_E$ is definable on some open rectangle $\Delta$ (with Gaussian rational corners) whose closure does not meet $\Lambda$. Let $a \in \Delta$ be a Gaussian rational and let $n \in \N$. Then $\Delta' \leteq \class{n(z-a)}{z\in \Delta}$ is a rectangle about the origin, as large as we like by choosing suitable $n$. Then for $z = n(b-a) \in \Delta'$ we have $\exp_E(z) = n \cdot (\exp_E(b) - \exp_E(a))$ where the operations on the right hand side are the group operations in $E$, and so $\exp_E$ (and hence $\wp$ and $\wp'$) are locally definable as analytic functions on all of $\C$ including their poles.

Likewise, if the restriction of the complex exponential function to any open subset of $\C$ is definable then $\exp$ is locally definable on all of $\C$.



\subsection*{Isogeny and Schwartz reflection} \ \\
Next we define an equivalence relation on the set of $\wp$-functions and show that if $\wp$ is locally definable in some $\mathcal{R}$ then every function in the same equivalence class is also locally definable in $\mathcal{R}$.

Complex conjugation $z \mapsto \overline z$ is definable, so for any (locally) definable holomorphic $f$, its Schwarz reflection $f^{\SR}$ given by $f^{\SR}(z) = \overline{f(\overline z)}$ is a (locally) definable holomorphic function. The Schwarz reflection of $\wp_\Lambda$ is easily seen to be $\wp_{\overline \Lambda}$.


If $\alpha \in \C \minus \{0\}$, then an easy calculation shows that $\wp_{\alpha\Lambda}(z) = \frac{1}{\alpha^2}\wp_\Lambda(z/\alpha)$, where $\alpha\Lambda = \class{\alpha\lambda}{\lambda\in\Lambda}$. Thus if $\wp_\Lambda$ is locally definable then $\wp_{\alpha\Lambda}$ is locally definable using the parameter $\alpha$.

More generally, suppose we have lattices $\Lambda_1$ and $\Lambda_2$ with $\Lambda_1 \subs \alpha\Lambda_2$. Then we get a homomorphism between the corresponding elliptic curves $\phi_\alpha: E_1 \to E_2$ given by $\phi_\alpha = \exp_{E_2} \circ \alpha \circ \exp_{E_1}^{-1}$, which is surjective and has finite kernel. Such a homomorphism is called an \emph{isogeny}. All isogenies are rational homomorphisms, hence are definable in \Rbar. Thus $\exp_{E_2} = \phi_\alpha \circ \exp_{E_1} \circ \alpha^{-1}$, so $\exp_{E_2}$ is locally definable from $\exp_{E_1}$, and $\wp_{\Lambda_2}$ is locally definable from $\wp_{\Lambda_1}$. In general, the parameter $\alpha$ is needed.


When there is a surjective isogeny $E_1 \to E_2$, we say $E_1$ is \emph{isogenous} to $E_2$. Isogeny is an equivalence relation on elliptic curves. It gives rise to an equivalence relation on lattices with $\Lambda_1$ equivalent to $\Lambda_2$ if and only if there is $\alpha\in \C^\cross$ such that $\Lambda_1 \subs \alpha \Lambda_2$. We call this equivalence relation isogeny of lattices.

We define two Weierstrass $\wp$-functions associated with lattices $\Lambda_1$ and $\Lambda_2$ to be \emph{\ISRequivalent} (isogeny-Schwarz reflection-equivalent) if  $\Lambda_1$ is isogenous to either $\Lambda_2$ or its complex conjugate. This is also an equivalence relation. We extend our equivalence relation to the usual exponential function by saying that it forms an \ISR-class of its own.

We have shown the following, which is one direction of Theorem~\ref{interdefinability}.
\begin{prop}\label{ISR-def}
If $\wp_\Lambda$ is locally definable in an expansion $\mathcal{R}$ of $\Rbar$ and $\wp_{\Lambda'}$ can be obtained from $\wp_\Lambda$ by isogeny and Schwarz reflection then $\wp_{\Lambda'}$ is also locally definable (with parameters) in $\mathcal{R}$.
\end{prop}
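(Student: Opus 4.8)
The plan is to assemble the facts established in the three preceding subsections. First I would unwind the hypothesis: saying that $\wp_{\Lambda'}$ is obtained from $\wp_\Lambda$ by isogeny and Schwarz reflection means that $\Lambda'$ is isogenous either to $\Lambda$ or to its complex conjugate $\overline\Lambda$, i.e.\ there is $\alpha \in \C^\times$ with $\Lambda \subs \alpha\Lambda'$ or with $\overline\Lambda \subs \alpha\Lambda'$. This splits into two cases, and the second reduces to the first: complex conjugation $z \mapsto \overline z$ is definable in $\Rbar$, hence in $\mathcal{R}$, so the Schwarz reflection $\wp_\Lambda^{\SR} = \wp_{\overline\Lambda}$ is locally definable in $\mathcal{R}$ whenever $\wp_\Lambda$ is. Thus it suffices to treat the case where $\Lambda'$ is isogenous to $\Lambda$, after possibly replacing $\Lambda$ by $\overline\Lambda$ at the outset.

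Next I would run the group-structure argument. If $\wp_\Lambda$ is locally definable in $\mathcal{R}$, then so is $\wp_\Lambda'$ (by the standard $\epsilon$-$\delta$ estimates), hence so is the map $\exp_E\colon \C \to E(\C) \subs \mathbb{P}^2(\C)$, $z \mapsto [\wp_\Lambda(z):\wp_\Lambda'(z):1]$; and by the translate-and-dilate trick together with the rationality (hence $\Rbar$-definability) of the group law on $E$, the map $\exp_E$ is in fact locally definable as an analytic map on all of $\C$, including at the lattice points. Now choose $\alpha \in \C^\times$ with $\Lambda \subs \alpha\Lambda'$ and form the isogeny $\phi_\alpha = \exp_{E'} \circ \alpha \circ \exp_E^{-1}\colon E \to E'$: this is a rational homomorphism whose coefficients depend on $\alpha$ and on the coefficients $g_2,g_3,g_2',g_3'$ of the two curves, so it is definable in $\Rbar$ with parameters. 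Then the identity $\exp_{E'} = \phi_\alpha \circ \exp_E \circ \alpha^{-1}$ exhibits $\exp_{E'}$ as a composite of a (parameter-free) locally definable map with $\Rbar$-definable maps involving the parameter $\alpha$, so $\exp_{E'}$ is locally definable with parameters in $\mathcal{R}$. Passing to the standard affine chart $\{Z \neq 0\}$ of $\mathbb{P}^2$, which is a definable chart and which contains $\exp_{E'}(z)$ precisely when $z \notin \Lambda'$, we conclude that the components $\wp_{\Lambda'}$ and $\wp_{\Lambda'}'$ of $\exp_{E'}$ are locally definable with parameters on $\C \minus \Lambda'$, which is the whole domain of $\wp_{\Lambda'}$. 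This finishes both cases.

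Since all the real work has already been done in the preceding subsections, there is no genuine obstacle here; the points that need a little attention are purely bookkeeping: checking that local definability is stable under restriction and under composition with $\Rbar$-definable (parameter) maps, and keeping careful track of the pole sets so that the passage between the projective map $\exp_{E'}$ and the affine function $\wp_{\Lambda'}$ is legitimate -- which is exactly what the earlier extension of $\exp_E$ across the lattice points is for.
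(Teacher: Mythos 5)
Your proof is correct and it reconstructs exactly the argument the paper spreads across the two preceding subsections (``Use of the group structure'' and ``Isogeny and Schwartz reflection''), which the Proposition itself just cites with the phrase ``We have shown the following.'' The reduction of the Schwarz-reflection case to the isogeny case via definability of complex conjugation, the extension of $\exp_E$ across $\Lambda$ by the translate-and-dilate trick, the use of the formula $\exp_{E'} = \phi_\alpha \circ \exp_E \circ \alpha^{-1}$ with $\phi_\alpha$ a rational (hence $\Rbar$-definable with parameters) homomorphism, and the recovery of $\wp_{\Lambda'}$ and $\wp'_{\Lambda'}$ from the affine chart $\{Z \neq 0\}$ are all precisely the steps the paper has in mind.
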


\section{Predimension and strong extensions}

In this section we fix $\F = \{f_1,\ldots,f_N\}$, where the $f_i$ are $\wp$-functions or $\exp$, and are from distinct \ISR-classes. If $f_i = \wp$ we write $E_i$ for the elliptic curve corresponding to $f_i$  and $\exp_i: \C \to E_i(\C)$ for its exponential map given by $\exp_i(z) = [\wp(z) : \wp'(z) : 1]$. If $f_i = \exp$ define $E_i = \gm$ and $\exp_i = \exp$.

Let $k_i$ be $\Q$ if $E_i$ is $\gm$ or is an elliptic curve without complex multiplication, and $\Q(\tau)$ if $E_i$ is an elliptic curve with complex multiplication by $\tau$. Then the graph $\Gamma_i(\C)$ of $\exp_i$ is a subgroup of $(\ga \cross E_i)(\C)$, and it is in fact a $k_i$-vector space. Furthermore, if $A$ is any subfield of $\C$ over which all the $E_i$ are defined then $\Gamma_i(A) \leteq \Gamma_i(\C) \cap (\ga \cross E_i)(A)$ is a $k_i$-subspace of $\Gamma_i(\C)$.

\begin{defn}
Suppose that $A \subs B \subs \C$ are subfields. For each $i = 1,\ldots,N$, we define the $f_i$-group rank $\grk_i(B/A)$ to be the $k_i$-linear dimension of $\Gamma_i(B)/\Gamma_i(A)$, and we define the $\F$-group rank to be $\grk_\F(B/A) = \sum_{i=1}^N \grk_i(B/A)$.
\end{defn}

\begin{defn}
Let $A \subs B \subs \C$ be subfields with $\td(B/A)$ finite. The $\F$-predimension of $B$ over $A$ is defined as
\[\delta_\F(B/A) = \td(B/A) - \grk_\F(B/A)\]
when $\grk_\F(B/A)$ is finite, and $-\infty$ otherwise.
\end{defn}

\begin{defn}
Let $A \subs B \subs \C$ be subfields. Then we say $A$ is $\F$-strong in $B$, written $A \strong_\F B$, if and only if for all $C$ with $A \subs C \subs B$ and $\td(C/A)$ finite, we have $\delta_\F(C/A) \ge 0$.
\end{defn}

It is not clear from the definition that $\F$-strong proper subfields of $\C$ exist, but later in Proposition~\ref{closed implies strong} we will show that they do, in fact that every subfield which is closed in the sense of the pregeometry $\hcl_\F$ is $\F$-strong. Note that since $\F$ is finite, there are $\hcl_\F$-closed proper subfields of $\C$, indeed $\hcl_\F(\emptyset)$ is countable.

In fact there are many more $\F$-strong subfields than $\hcl_\F$-closed subfields.
\begin{lemma}\label{chain lemma}
Let $A \strong_\F B \strong_\F \C$ be $\F$-strong subfields. Then there is an ordinal $\lambda$ and a chain of subfields $(A_\alpha)_{\alpha \le \lambda}$ such that $A_0 = A$ and $A_\lambda = B$ and:
\begin{enumerate}[(i)]
\item For each $\alpha < \lambda$, either $\td(A_{\alpha+1}/A_\alpha) = 1$ and $\delta(A_{\alpha+1}/A_\alpha) = 1$ or $\td(A_{\alpha+1}/A_\alpha)$ is finite and $\delta(A_{\alpha+1}/A_\alpha) = 0$.
\item If $\alpha$ is a limit then $A_\alpha = \bigcup_{\beta < \alpha} A_\beta$.
\item If $0 \le \alpha \le \beta \le \lambda$ then $A_\alpha \strong_\F A_\beta$.
\end{enumerate}
\end{lemma}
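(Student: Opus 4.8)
The plan is to construct the chain $(A_\alpha)_{\alpha\le\lambda}$ by transfinite recursion: set $A_0=A$, take unions at limits, and at each successor stage invoke a ``one-step'' lemma producing a minimal extension of one of the two allowed kinds. Two auxiliary facts are needed first. The first is the existence of \emph{strong closures}: if $P\strong_\F B$ and $P\subseteq C\subseteq B$ with $\td(C/P)$ finite, then there is a field $\cl_B(C)$ with $C\subseteq\cl_B(C)\subseteq B$, with $\td(\cl_B(C)/P)$ finite, with $\cl_B(C)\strong_\F B$, and with $\delta_\F(\cl_B(C)/P)\le\delta_\F(C/P)$. Indeed, on the fields lying between $C$ and $B$ with finite transcendence degree over $P$, the predimension $\delta_\F(\cdot/P)$ takes values in $\N$ (nonnegative because $P\strong_\F B$; finite because finite $\td$ forces $\grk_\F$ finite, by Ax's theorem and its $\wp$-analogue, which give $\grk_\F(C'/C)\le N\cdot\td(C'/C)$), so a $\delta_\F(\cdot/P)$-minimiser exists; and any minimiser $D$ is automatically $\F$-strong in $B$, since for $D\subseteq C'\subseteq B$ with $\td(C'/D)$ finite, additivity of $\td$ and $\grk_\F$ in towers gives $\delta_\F(C'/D)=\delta_\F(C'/P)-\delta_\F(D/P)\ge0$ by minimality.

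The heart of the argument is the following one-step lemma: if $P\strong_\F B$ and $P\subsetneq B$, then there is $P_1$ with $P\subsetneq P_1\strong_\F B$ such that either $\td(P_1/P)=1$ and $\delta_\F(P_1/P)=1$, or $\td(P_1/P)$ is finite and $\delta_\F(P_1/P)=0$. I would prove this by a dichotomy. If some $b\in B\setminus P$ satisfies $\delta_\F(\cl_B(P(b))/P)=0$, then $P_1=\cl_B(P(b))$ is a step of the second kind. Otherwise, first note that $B$ cannot be algebraic over $P$: if it were, any $b\in B\setminus P$ would give $\td(\cl_B(P(b))/P)=0$ and hence $\delta_\F(\cl_B(P(b))/P)=-\grk_\F(\cl_B(P(b))/P)\le0$, forcing it to be $0$. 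So choose $t\in B\setminus P$ transcendental over $P$. Then $\delta_\F(P(t)/P)=1$: it lies in $\{0,1\}$ since $\td(P(t)/P)=1$ and $\grk_\F(P(t)/P)\ge0$, and if it were $0$ then $\delta_\F(\cl_B(P(t))/P)=0$ (being $\ge0$ and $\le\delta_\F(P(t)/P)$), again a step of the second kind. And $P(t)\strong_\F B$: for $C$ with $P(t)\subseteq C\subseteq B$ and $\td(C/P(t))$ finite, $\cl_B(C)$ is $\F$-strong and properly contains $P$, so $\delta_\F(\cl_B(C)/P)\ne0$, hence $\ge1$ (no step of the second kind exists), so $\delta_\F(C/P)\ge\delta_\F(\cl_B(C)/P)\ge1$ and $\delta_\F(C/P(t))=\delta_\F(C/P)-1\ge0$. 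Thus $P\to P(t)$ is a step of the first kind.

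The second auxiliary fact is that an increasing union $C^*=\bigcup_\xi C_\xi$ of $\F$-strong subfields of $\C$ is again $\F$-strong. Suppose not: there is $C^*\subseteq W\subseteq B$ with $\td(W/C^*)$ finite and $\delta_\F(W/C^*)<0$. Taking a transcendence basis of $W$ over $C^*$ together with representatives and $\exp_i$-images of $k_i$-bases of each $\Gamma_i(W)/\Gamma_i(C^*)$ (all finite, as $\grk_\F$ is finite here), I get a finite tuple $\cbar\subseteq W$ with $\delta_\F(C^*(\cbar)/C^*)<0$. As $\xi$ grows, $\td(\cbar/C_\xi)$ is non-increasing with values in $\{0,\dots,\card{\cbar}\}$ and stabilises at $\td(\cbar/C^*)$ (algebraic relations over $C^*$ being witnessed in some $C_\xi$); and $\grk_\F(C_\xi(\cbar)/C_\xi)\ge\grk_\F(C^*(\cbar)/C^*)$ for all large $\xi$, because once the finitely many entries of a chosen $k_i$-basis of $\Gamma_i(C^*(\cbar))/\Gamma_i(C^*)$ lie in some $C_{\xi_0}(\cbar)$ they remain $k_i$-independent modulo $\Gamma_i(C_\xi)$ for $\xi\ge\xi_0$, since $\Gamma_i(C_\xi)\subseteq\Gamma_i(C^*)$. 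Hence $\delta_\F(C_\xi(\cbar)/C_\xi)\le\delta_\F(C^*(\cbar)/C^*)<0$ for large $\xi$, contradicting $C_\xi\strong_\F B$.

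With these in place I would run the recursion: $A_0=A$; at a successor stage, if $A_\alpha=B$ stop (and set $\lambda=\alpha$), otherwise apply the one-step lemma to get $A_{\alpha+1}\supsetneq A_\alpha$ of one of the two kinds, still $\F$-strong in $B$; at a limit stage put $A_\alpha=\bigcup_{\beta<\alpha}A_\beta$, which is $\F$-strong in $B$ by the union fact. The chain is strictly increasing until it reaches $B$, and since a strictly increasing chain of subfields of $B$ cannot be indexed by all ordinals, it must terminate at some $\lambda$ with $A_\lambda=B$. Clause~(i) is the conclusion of the one-step lemma, clause~(ii) is by construction, and clause~(iii) is immediate: each $A_\alpha$ is $\F$-strong in $B=A_\lambda$, so for $A_\alpha\subseteq C\subseteq A_\beta$ with $\td(C/A_\alpha)$ finite we have $\delta_\F(C/A_\alpha)\ge0$ as $C\subseteq B$; in fact only the hypothesis $A\strong_\F B$ is used. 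I expect the main obstacle to be the one-step lemma — especially the point that once no predimension-zero extension is available the extension is forced to be ``generic'', so that $P(t)$ is itself already $\F$-strong — together with the bookkeeping at limit stages; the strong-closure construction is routine.
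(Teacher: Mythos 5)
Your construction is correct, and the core dichotomy at successor steps (extend by a finite-transcendence-degree subfield of $B$ with $\delta_\F=0$ if one exists, otherwise adjoin a transcendental and argue the result is forced to be a $\td=\delta=1$ step that is still strong in $B$) is exactly the one the paper uses. You also correctly observe that the hypothesis $B\strong_\F\C$ is not needed, and you spell out the two facts the paper leaves implicit: that an increasing union of subfields strong in $B$ is again strong in $B$ (the paper hides this inside ``(iii) is straightforward to verify''), and that any witness of $\delta_\F(F/A_\gamma)=0$ is automatically strong in $B$ by additivity of $\delta$ in towers.

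The main structural difference is bookkeeping. The paper fixes an enumeration $(b_\alpha)_{\alpha<\lambda}$ of $B$ and, at the successor stage $\gamma+1$, applies the dichotomy \emph{to the specific element $b_\gamma$}: either there is a finite-$\td$ extension of $A_\gamma$ containing $b_\gamma$ with $\delta_\F=0$, or $b_\gamma$ is forced to be transcendental over $A_\gamma$ (your observation that an algebraic $b_\gamma\notin A_\gamma$ already yields a $\delta=0$ extension) and one takes the relative algebraic closure of $A_\gamma(b_\gamma)$ in $B$. This gives $b_\gamma\in A_{\gamma+1}$ by construction, so $A_\lambda=B$ and termination come for free, with no cardinality argument and no need for a freestanding one-step lemma or a notion of strong closure. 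Your route, via $\cl_B(\,\cdot\,)$-minimisers, a stand-alone one-step lemma, and termination by counting, is more machinery but also valid, and it packages the reusable content (strong closures, unions of strong subfields) more explicitly. One small point to tighten: your parenthetical claim that finite $\td$ forces $\grk_\F$ finite is unnecessary for the union argument --- if $\grk_\F(W/C^*)$ were infinite you could already pick finitely many basis elements to make $\grk$ exceed $\td(W/C^*)$ and get a finite tuple $\cbar$ with $\delta_\F(C^*(\cbar)/C^*)<0$ directly --- and it is better avoided since the inequality $\grk_\F\le N\cdot\td$ over an arbitrary base is not immediate.
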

\begin{proof}
Enumerate $B$ as $(b_\alpha)_{\alpha < \lambda}$ for some limit ordinal $\lambda$. Assume inductively that we have $A_\beta$ for $\beta < \alpha$ satisfying the conditions (i)--(iii) and such that $A_\beta \strong_\F B$ and $b_\gamma \in A_\beta$ whenever $\gamma < \beta$. If $\alpha$ is a limit, take $A_\alpha = \bigcup_{\beta < \alpha} A_\beta$.

Now suppose $\alpha$ is a successor, say $\alpha = \gamma +1$. If there is a finite transcendence degree extension $F$ of $A_\gamma$ containing $b_\gamma$ such that $\delta(F/A_\gamma) = 0$ then choose $A_\alpha$ be some some such $F$. Otherwise, take $A_\alpha$ to be the algebraic closure of $A_\gamma \cup\{b_\gamma\}$. Then $\delta(A_\alpha/A_\gamma) = \td(A_\alpha/A_\gamma) = 1$. Conditions (i) and (ii) are immediate, (iii) is straightforward to verify, and $B = A_\lambda$ by construction.
\end{proof}

\subsection*{Extending derivations}

\begin{prop}\label{delta = 0 extension}
Suppose that $A \strong_\F B$ are subfields of $\C$ with $\td(B/A)$ finite and $\delta_\F(B/A) = 0$, and let $\partial \in \Der_\F(A)$. Then $\partial$ extends uniquely to a derivation $\partial' \in \Der_\F(B)$.
\end{prop}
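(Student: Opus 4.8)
The plan is to derive the whole statement from the single assertion that $\Omega_\F(B/A) = 0$. Recall, from the discussion following Fact~\ref{hcl derivations}, that $\Omega_\F(B/A)$ is the quotient of the module of K\"ahler differentials $\Omega(B/A)$ by the $B$-subspace $W$ spanned by the forms $\omega_{f,a} = f'(a)\,da - d(f(a))$, taken over all $f \in \F$ and all $a \in B$ with $f(a), f'(a) \in B$, and that $\Der_\F(B/A)$ is the annihilator of $W$, hence the dual of $\Omega_\F(B/A)$. Granting $\Omega_\F(B/A) = 0$, uniqueness is immediate: any two extensions $\partial_1', \partial_2' \in \Der_\F(B)$ of $\partial$ differ by an element of $\Der_\F(B/A) = \Omega_\F(B/A)^{*} = 0$.

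To prove $\Omega_\F(B/A) = 0$, equivalently $W = \Omega(B/A)$, the argument would run as follows. In characteristic zero one has $\dim_B \Omega(B/A) = \td(B/A) =: d$, which is finite and, because $\delta_\F(B/A) = 0$, equal to $\grk_\F(B/A)$. Fix for each $i$ a $k_i$-basis of $\Gamma_i(B)/\Gamma_i(A)$; collected over all $i$ these yield $d$ points on the graphs $\Gamma_i$ and hence $d$ forms among the $\omega_{f,a}$ (for $\wp$-functions one takes $\wp'(a)\,da - d(\wp(a))$, the companion form involving $\wp'(a)$ being forced by the differential equation satisfied by $\wp$). Then: (i) these $d$ forms span $W$, since an arbitrary point of $\Gamma_i(B)$ is, modulo $\Gamma_i(A)$, a $k_i$-combination of the chosen basis points on $\Gamma_i$, and the attached form is correspondingly a $B$-linear combination of the basis forms by the addition formulas on $E_i$ (together with the complex-multiplication formulas and the relations among torsion coordinates when $k_i \ne \Q$), while forms attached to points of $\Gamma_i(A)$ vanish in $\Omega(B/A)$; and (ii) these $d$ forms are $B$-linearly independent in $\Omega(B/A)$, which is exactly the incarnation of Ax's theorem for $\exp$ and the Weierstrass $\wp$-functions established by the second author in \cite{TEDESV}. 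Since $\dim_B \Omega(B/A) = d$, (i) and (ii) give $W = \Omega(B/A)$. (If one prefers to keep the transcendence input strictly finite-dimensional one may first split $A \strong_\F B$, by the method of Lemma~\ref{chain lemma}, into a \emph{finite} chain of steps each of $\F$-predimension $0$ --- finiteness following from additivity of $\delta_\F$ and $\delta_\F \ge 0$ along each step --- and treat one step at a time, as in \cite{EAEF}.)

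For existence I would argue concretely. In characteristic zero the derivation $\partial\colon A \to A \subs B$ extends to \emph{some} derivation $\partial_0\colon B \to B$, not a priori an $\F$-derivation; measure its defect by $c_{f,a} := f'(a)\,\partial_0 a - \partial_0(f(a)) \in B$, and note that $c_{f,a} = 0$ whenever $a, f(a), f'(a) \in A$, since $\partial$ is already an $\F$-derivation. Now seek a derivation $\mu\colon B \to B$ vanishing on $A$, i.e. an element $\mu \in \Omega(B/A)^{*}$, with $\mu(\omega_{f,a}) = -c_{f,a}$ for all relevant $f, a$; for any such $\mu$ the derivation $\partial' := \partial_0 + \mu$ lies in $\Der_\F(B)$ and restricts to $\partial$ on $A$. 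Because the $\omega_{f,a}$ span $\Omega(B/A)$ there is at most one such $\mu$, and it exists provided the prescription is consistent, i.e. $\sum_k g_k\,\omega_{f_k,a_k} = 0$ implies $\sum_k g_k\,c_{f_k,a_k} = 0$. This is the case: every relation among the $\omega_{f,a}$ comes, through the group laws on the $E_i$, from the relations among the chosen basis points, and the defects $c_{f,a}$ satisfy the very same group-law identities --- the definition of $c_{f,a}$ being that of $\omega_{f,a}$ with the derivation $\partial_0$ in place of $d$, and $\partial_0$ obeying the Leibniz rule just as $d$ does.

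The main obstacle is step (ii): the $B$-linear independence of the $d = \grk_\F(B/A)$ differential forms attached to the chosen basis points. This is the genuine transcendence content of the proposition and is exactly where an Ax-type functional transcendence theorem must be invoked (via \cite{TEDESV}); without it one cannot obtain the upper bound $\dim_B \Omega_\F(B/A) \le \delta_\F(B/A)$ that, combined with the lower bound $\dim_B \Omega_\F(B/A) \ge \delta_\F(B/A) = 0$ coming from step (i), forces $\Omega_\F(B/A) = 0$. The other ingredients --- the dimension equality $\dim_B \Omega(B/A) = \td(B/A)$ in the possibly non-finitely-generated case, and the bookkeeping with the addition, complex-multiplication and torsion formulas showing simultaneously that these carry the forms $\omega_{f,a}$ into the span of the basis forms and the defects $c_{f,a}$ into the matching relations --- are routine but must be done with care.
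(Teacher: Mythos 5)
Your proof is correct and rests on the same essential ingredient as the paper's, namely the $B$-linear independence in $\Omega(B/A)$ of the forms $\omega_{ij}$ attached to a $k_i$-basis of $\Gamma_i(B)/\Gamma_i(A)$, which comes from the Ax-type result in \cite{TEDESV}; but the existence half is organized differently. The paper introduces the intermediate module $\Omega(B/\partial)$ (dual to $\Der(B/\partial) = \{\eta \in \Der(B) : \eta\restrict{A} = \lambda\partial \text{ for some } \lambda \in B\}$) sitting between $\Omega(B)$ and $\Omega(B/A)$, and obtains the extension by a pure dimension count: $\dim\Der_\F(B/\partial) = \td(B/A)+1-\grk_\F(B/A)=1$ while $\dim\Der_\F(B/A)=0$, so the one-dimensional space $\Der_\F(B/\partial)$ is spanned by a derivation whose restriction to $A$ is a nonzero scalar multiple of $\partial$, and one rescales. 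You instead build the extension concretely: take an arbitrary field-theoretic extension $\partial_0$, record the defects $c_{f,a}=f'(a)\partial_0 a - \partial_0 f(a)$, and correct by a $\mu\in\Der(B/A)$ chosen to cancel them. Both routes must at some point verify that a derivation annihilating the chosen $\omega_{ij}$ and behaving well on $A$ is automatically an $\F$-derivation on all of $B$ — the paper compresses this into the sentence ``Observe that for $\eta\in\Der(B/A)$ we have $\eta\in\Der_\F(B/A)$ if and only if $\eta\in\Ann(W)$'' and its analogue for $\Der(B/\partial)$, whereas you make the underlying group-law bookkeeping (addition formulas, complex multiplication, division points) explicit; so the transcendence content and the routine content are the same in both, just distributed differently. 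One small economy you could make: for the uniqueness half, step (ii) alone already forces $W=\Omega(B/A)$, since the $\grk_\F(B/A)=\td(B/A)$ independent images of the $\omega_{ij}$ span a subspace of full dimension, so the spanning claim (i) is only genuinely needed in the existence half, to get the consistency of the prescription for $\mu$. Finally, the parenthetical appeal to Lemma~\ref{chain lemma} is unnecessary here since $\td(B/A)$ is already assumed finite.
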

\begin{proof}
Let $\partial \in \Der_\F(A)$. Let $\Omega(B/\partial)$ be the quotient of $\Omega(B)$ by the relations $\sum a_i db_i = 0$ for those $a_i, b_i \in A$ such that $\sum a_i \partial b_i = 0$. Then we have quotient maps of $B$-vector spaces
\[\begin{diagram} \Omega(B) &\rOnto &\Omega(B/\partial)& \rOnto & \Omega(B/A) \end{diagram}.\]

Let $\Der(B/\partial) = \class{\eta \in \Der(B)}{(\exists \lambda \in B)[\eta\restrict{A} = \lambda\partial]}$. Then $\Der(B/\partial)$ is a $B$-vector subspace of $\Der(B)$ that is easily seen to be the dual space of $\Omega(B/\partial)$. Thus we have a sequence of inclusions
\[\begin{diagram} \Der(B/A) &\rInto & \Der(B/\partial)& \rInto & \Der(B) \end{diagram}\]
dual to the sequence above.

Since $A \strong_\F B$ we have $\sum_{i=1}^N \grk_i(B/A) \le \td(B/A)$, finite. For each $i=1,\ldots, N$ let $n_i = \grk_i(B/A)$, choose $(b_{i,j}, \exp_i(b_{i,j}))$ for $j=1,\ldots,n_i$, forming a $k_i$-linear basis for $\Gamma_i(B)$ over $\Gamma_i(A)$. Then for each $i$ and $j$ we have differential forms
$\omega_{ij} = f_i'(b_{i,j}) db_{i,j} - df_i(b_{i,j}) \in \Omega(B)$, and their images $\hat{\omega}_{ij} \in \Omega(B/A)$.

Let $G$ be the algebraic group $\prod_{i=1}^N E_i^{n_i}$, let $n = \sum_{i=1}^N n_i$ and let $TG = \ga^n \cross G$. Write $\bbar$ for the tuple of all $b_{i,j}$, and $\exp_G(\bbar) \in G(B)$ for the tuple of all $\exp_i(b_{i,j})$, so $(\bbar, \exp_G(\bbar)) \in TG(B)$. Since the $E_i$ are non-isogenous, every algebraic subgroup of $G$ is of the form $\prod_{i=1}^N H_i$ where each $H_i$ is a subgroup of $E_i^{n_i}$. Furthermore, these $H_i$ are given by $k_i$-linear equations. Since the $b_{i,j}$ are chosen so that the pairs $(b_{i,j}, \exp_i(b_{i,j}))$ are $k_i$-linearly independent in $\Gamma_i(B)$ over $\Gamma_i(A)$, it follows that the point $(\bbar,\exp_G(\bbar))$ does not lie in any coset $\gamma \cdot TH$ where $H$ is a proper algebraic subgroup of $G$ and $\gamma$ is defined over $A$. Hence, by \cite[Proposition~3.7]{TEDESV}, the differential forms $\hat\omega_{ij}$ are all $B$-linearly independent in $\Omega(B/A)$. It follows that the images of the $\omega_{ij}$ in $\Omega(B/\partial)$ are $B$-linearly independent. Thus their span, say $W$, has dimension equal to $\grk_\F(B/A)$.

Thus the annihilator $\Ann(W)$ has codimension $\grk_\F(B/A)$ in $\Der(B/\partial)$ and also in $\Der(B/A)$. Observe that for $\eta \in \Der(B/A)$, we have $\eta \in \Der_\F(B/A)$ if and only if $\eta \in \Ann(W)$. We have
\begin{eqnarray*}
\dim \Der_\F(B/A) &=& \dim \Der(B/A) - \dim W \\
 & = & \td(B/A) - \grk_\F(B/A) \\
 & = & \delta_\F(B/A) = 0
\end{eqnarray*}
so $\Der_\F(B/A) = \{0\}$.

 If $\partial = 0$ then $\Der_\F(B/\partial) = \Der_\F(B/A)$ and so $\partial$ only extends to the zero derivative on $B$. Otherwise $\partial \neq 0$ and
\[\dim \Der(B/\partial) = \dim \Der(B/A) + 1 = \td(B/A) + 1.\]
Again for $\eta \in \Der(B/\partial)$ we have $\eta \in \Der_\F(B/\partial)$ if and only if $\eta \in \Ann(W)$. So we have
\begin{eqnarray*}
\dim \Der_\F(B/\partial) &=& \dim \Der(B/\partial) - \dim W \\
 & = & \td(B/A) +1 - \grk_\F(B/A) \\
 & = & \delta_\F(B/A) +1 = 1 .
\end{eqnarray*}
Thus there is $\eta \in \Der_\F(B/\partial) \minus \Der_\F(B/A)$, unique up to scalar multiplication. So $\eta\restrict{A} = \lambda\partial$ for some $\lambda\neq 0$, and thus $\lambda^{-1}\eta$ is the unique $\F$-derivation on $B$ which extends $\partial$, as required.
\end{proof}

\begin{prop}\label{simple extension}
Suppose that $A \strong_\F B$ are subfields of $\C$ with $\td(B/A) = \delta_\F(B/A) = 1$, let $b \in B \minus A$, and let $\partial \in \Der_\F(A)$. Then for each $c \in B$, the derivation $\partial$ extends uniquely to a derivation $\partial' \in \Der_\F(B)$ such that $\partial' b = c$. In particular there is $\partial' \in \Der_\F(B/A)$ such that $\partial' b = 1$.
\end{prop}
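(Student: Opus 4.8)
The plan is to imitate the proof of Proposition~\ref{delta = 0 extension}, exploiting that the hypothesis $\delta_\F(B/A) = \td(B/A) = 1$ forces $\grk_\F(B/A) = 0$, and hence $\Gamma_i(B) = \Gamma_i(A)$ for every $i$. Consequently no $\wp$-function or exponential contributes a new relation, the Ax-type input from \cite{TEDESV} is not needed here, and the whole argument reduces to linear algebra over $B$.

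First I would record the dimension counts, obtained exactly as in the proof of Proposition~\ref{delta = 0 extension} but with every $n_i = 0$ (so that the subspace $W$ there is trivial): since $\Gamma_i(B) = \Gamma_i(A)$, every $\eta \in \Der(B/A)$ already satisfies the chain-rule identities defining an $\F$-derivation, so $\Der_\F(B/A) = \Der(B/A)$ and $\dim_B \Der_\F(B/A) = \td(B/A) = 1$. Likewise, with $\Der(B/\partial) = \class{\eta \in \Der(B)}{(\exists \lambda \in B)\ \eta\restrict{A} = \lambda\partial}$, when $\partial \neq 0$ the scalar $\lambda = \lambda_\eta$ is uniquely determined by $\eta$, one has $\Der_\F(B/\partial) = \Der(B/\partial)$, and $\dim_B \Der_\F(B/\partial) = \dim_B \Der(B/A) + 1 = 2$.

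Next I would use that $b$ is transcendental over $A$ (as $b \in B \minus A$ and $\td(B/A) = 1$), so that $B$ is separably algebraic over $A(b)$ and $\Omega(B/A)$ is one-dimensional, generated by $db$. Hence evaluation $\eta \mapsto \eta b$ is an isomorphism $\Der(B/A) \cong B$, and its restriction to the one-dimensional subspace $\Der_\F(B/A)$ is still an isomorphism onto $B$. This already settles the case $\partial = 0$: for a given $c$ the required extension is the unique $\eta \in \Der_\F(B/A)$ with $\eta b = c$, and uniqueness is clear since any $\F$-derivation of $B$ extending the zero derivation lies in $\Der_\F(B/A)$; taking $c = 1$ yields the final clause of the proposition.

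For $\partial \neq 0$ I would consider the $B$-linear map $\rho \colon \Der_\F(B/\partial) \to B^2$, $\eta \mapsto (\lambda_\eta, \eta b)$. Its kernel is $\class{\eta \in \Der_\F(B/A)}{\eta b = 0} = \{0\}$ by the previous step, so $\rho$ is injective between two $2$-dimensional $B$-spaces and therefore an isomorphism; then $\partial' \leteq \rho^{-1}(1,c)$ is an $\F$-derivation of $B$ with $\partial'\restrict{A} = \partial$ and $\partial' b = c$, and uniqueness follows because any $\F$-derivation of $B$ restricting to $\partial$ lies in $\Der_\F(B/\partial)$ with $\lambda_\eta = 1$, hence equals $\rho^{-1}(1,c)$. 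The only step needing a little care is the identity $\Der_\F(B/\partial) = \Der(B/\partial)$ (and its analogue $\Der_\F(B/A) = \Der(B/A)$): this is just the $n_i = 0$ specialization of the corresponding step in Proposition~\ref{delta = 0 extension}, valid because $\grk_\F(B/A) = 0$ means there are no ``new'' arguments $a$ at which the $\F$-derivation condition must be checked. So the content of the proposition sits entirely in Proposition~\ref{delta = 0 extension} together with the transcendence of $b$ over $A$, and no genuinely new difficulty arises.
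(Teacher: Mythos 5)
Your proof is correct, but it takes a noticeably longer route than the paper's. The paper's argument is two sentences: since $b$ is a transcendence basis of $B$ over $A$ and we are in characteristic $0$, the given $\partial$ extends \emph{uniquely} to a field derivation $\partial'\in\Der(B)$ with $\partial' b = c$ (this is standard commutative algebra, needing no dimension count); then, exactly as you observe, $\td(B/A)=\delta_\F(B/A)=1$ forces $\grk_\F(B/A)=0$, hence $\Gamma_i(B)=\Gamma_i(A)$ for every $i$, so the $\F$-derivation conditions one would have to check for $\partial'$ on $B$ all concern points already in $A$ and are inherited from $\partial$. Uniqueness as an $\F$-derivation is then automatic, because it already holds at the level of field derivations.

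You instead mimic the machinery of Proposition~\ref{delta = 0 extension}: you set up $\Omega(B/\partial)$, identify $\Der_\F(B/A)$ with $\Der(B/A)$ and $\Der_\F(B/\partial)$ with $\Der(B/\partial)$, compute $\dim_B\Der_\F(B/\partial)=2$ when $\partial\neq 0$, and extract the extension from an explicit isomorphism $\rho\colon\Der_\F(B/\partial)\to B^2$. This is all valid, and you correctly note that the Ax-type input from \cite{TEDESV} drops out because $W$ is trivial here. But none of the dual-space apparatus is needed: the whole content is already contained in (a) uniqueness of the field-derivation extension given $\partial' b = c$, and (b) the identity $\Gamma_i(B)=\Gamma_i(A)$. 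Your approach has the minor virtue of treating Propositions~\ref{delta = 0 extension} and~\ref{simple extension} in a visibly uniform way, at the cost of a case split on $\partial = 0$ versus $\partial\neq 0$ and an extra linear-algebra layer that the direct argument avoids.
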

\begin{proof}
The derivation $\partial \in \Der_\F(A)$ extends uniquely to a field derivation $\partial' \in \Der(B)$ with $\partial' b = c$. Since $\td(B/A) = \delta_\F(B/A) = 1$ we have $\grk_\F(B/A) = 0$, so for each $i$ we have $\Gamma_i(B) = \Gamma_i(A)$, and thus $\partial' \in \Der_\F(B)$.
\end{proof}

\begin{prop}\label{extend derivations}
Let $A \strong_\F \C$ be a subfield and let $\partial \in \Der_\F(A)$. Then there is $\partial' \in \Der_\F(\C)$, extending $\partial$.
\end{prop}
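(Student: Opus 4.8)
The plan is to combine Proposition~\ref{delta = 0 extension} and Proposition~\ref{simple extension} with the chain decomposition given by Lemma~\ref{chain lemma}, and then take a union (direct limit) along the chain. First I would apply Lemma~\ref{chain lemma} with the chain $A \strong_\F \C \strong_\F \C$, obtaining an ordinal $\lambda$ and a chain of subfields $(A_\alpha)_{\alpha \le \lambda}$ with $A_0 = A$, $A_\lambda = \C$, each $A_\alpha \strong_\F A_\beta$ for $\alpha \le \beta$, and each successor step $A_\alpha \strong_\F A_{\alpha+1}$ of one of the two types: either $\td(A_{\alpha+1}/A_\alpha) = \delta_\F(A_{\alpha+1}/A_\alpha) = 1$, or $\td(A_{\alpha+1}/A_\alpha)$ finite and $\delta_\F(A_{\alpha+1}/A_\alpha) = 0$.

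Next I would construct, by transfinite recursion on $\alpha \le \lambda$, a derivation $\partial_\alpha \in \Der_\F(A_\alpha)$ extending $\partial$, with $\partial_\beta \restrict{A_\alpha} = \partial_\alpha$ whenever $\alpha \le \beta$. The base case is $\partial_0 = \partial$. At a successor $\alpha+1$: in the first (transcendence-degree-one, $\delta = 1$) case, apply Proposition~\ref{simple extension} to $A_\alpha \strong_\F A_{\alpha+1}$ (choosing, say, $c = 0$, or any value) to get a unique extension $\partial_{\alpha+1} \in \Der_\F(A_{\alpha+1})$ of $\partial_\alpha$; in the second ($\delta = 0$) case, apply Proposition~\ref{delta = 0 extension} to $A_\alpha \strong_\F A_{\alpha+1}$ to get the unique extension $\partial_{\alpha+1} \in \Der_\F(A_{\alpha+1})$ of $\partial_\alpha$. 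At a limit ordinal $\alpha$, we have $A_\alpha = \bigcup_{\beta < \alpha} A_\beta$, and the coherence of the $\partial_\beta$ means they glue to a well-defined function $\partial_\alpha$ on $A_\alpha$; it is a derivation because each of its defining identities (additivity, Leibniz, and the $\F$-condition at a point $a$ with $a, f(a), f'(a) \in A_\alpha$) involves only finitely many elements of $A_\alpha$, all lying in some $A_\beta$ with $\beta < \alpha$, where the identity holds. Then $\partial' \leteq \partial_\lambda \in \Der_\F(\C)$ is the desired extension.

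The only subtlety is that Proposition~\ref{simple extension} as stated requires specifying a value $c$ and a chosen $b \in B \minus A$, and uniqueness there is relative to that choice; for the purposes of this proposition we only need \emph{some} extension, so we simply fix any such $b$ and any $c$ (the choice is immaterial) at each step of the first type. One should also note in passing that the two propositions do apply: in both cases $A_\alpha \strong_\F A_{\alpha+1}$ holds by clause (iii) of Lemma~\ref{chain lemma}, and the transcendence-degree and predimension hypotheses are exactly clause (i). I do not expect a genuine obstacle here — the real work was done in Propositions~\ref{delta = 0 extension} and~\ref{simple extension} — the one point requiring a line of care is the verification that the limit-stage union is again an $\F$-derivation, which follows from the finite character of all the defining conditions.
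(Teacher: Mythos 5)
Your proposal is correct and follows exactly the same route as the paper, whose entire proof is the one-line instruction to ``put together'' Lemma~\ref{chain lemma}, Proposition~\ref{delta = 0 extension} and Proposition~\ref{simple extension}. You have simply spelled out the transfinite recursion that is implicit there, including the (routine but worth noting) check that the finite character of the $\F$-derivation conditions makes the limit-stage union work.
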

\begin{proof}
Put together Lemma~\ref{chain lemma}, Proposition~\ref{delta = 0 extension} and Proposition~\ref{simple extension}.
\end{proof}

\subsection*{Predimension and dimension}\

\begin{prop}\label{closed implies strong}
Let $C \subs \C$ be an $\hcl_\F$-closed subfield. Then $C \strong_\F \C$.
\end{prop}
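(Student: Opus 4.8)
The plan is to prove the contrapositive of the definition of $\F$-strong: suppose $C$ is not $\F$-strong in $\C$, so there is a subfield $D$ with $C \subs D \subs \C$, $\td(D/C)$ finite, and $\delta_\F(D/C) < 0$; I will derive a contradiction with the hypothesis that $C$ is $\hcl_\F$-closed. We may enlarge $D$ to its algebraic closure inside $\C$ without changing the predimension, so assume $D$ is algebraically closed; then pick $D$ with $\td(D/C)$ minimal among all such counterexamples. Minimality is the key organising device: it forces $\delta_\F(D/C) = -1$ and, more importantly, it forces $D$ to be ``freely generated'' over $C$ by the group-theoretic data in a way that will let us produce derivations.

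The main step is to build, for a carefully chosen element, a nonzero $\F$-derivation from $C$ witnessing the failure of closedness. Concretely: since $\delta_\F(D/C) = \td(D/C) - \grk_\F(D/C) = -1$, one has $\grk_\F(D/C) = \td(D/C) + 1$. Choosing $k_i$-linear bases $(b_{i,j},\exp_i(b_{i,j}))$ for $\Gamma_i(D)$ over $\Gamma_i(C)$ and setting $n = \sum_i n_i = \grk_\F(D/C)$, minimality of $\td(D/C)$ should give that the tuple $(\bbar, \exp_G(\bbar))$ lies on no coset $\gamma \cdot TH$ of the tangent bundle of a proper algebraic subgroup $H \le G = \prod_i E_i^{n_i}$ with $\gamma$ defined over $C$ — exactly the hypothesis of \cite[Proposition~3.7]{TEDESV}. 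As in the proof of Proposition~\ref{delta = 0 extension}, that proposition gives $n$ linearly independent differential forms $\hat\omega_{ij}$ in $\Omega(D/C)$; but $\dim_D \Omega(D/C) = \td(D/C) = n-1 < n$, a contradiction. (One must be a little careful: if $\grk_\F(D/C)$ were infinite the predimension is $-\infty$ and one argues instead that some finitely generated piece already has negative predimension, reducing to the finite case.)

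The harder direction — actually, the only delicate point — is justifying that minimality forces the no-coset condition. The issue is that a priori the point $(\bbar,\exp_G(\bbar))$ could lie on a translate of $TH$ for a proper subgroup $H = \prod_i H_i$ defined over $C$ (using that $G$ is a product of pairwise non-isogenous factors, so its algebraic subgroups and their cosets are products of $k_i$-linear ones). If it did, I would extract from that coset relation a proper subfield $D' \subs D$, algebraically closed and containing $C$, which still carries ``too much'' group rank relative to its transcendence degree — i.e. $\delta_\F(D'/C) < 0$ — contradicting minimality of $\td(D/C)$. Making this reduction precise (splitting the basis according to which coordinates are constrained by the $H_i$, and checking the transcendence degree genuinely drops) is where the real bookkeeping lives. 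Finally, from the contradiction I conclude no such $D$ exists, i.e. $C \strong_\F \C$; alternatively, having produced a nonzero form in $\Omega(D/C)$ one dualises to a nonzero $\partial \in \Der_\F(\C/C)$ killing none of the relevant generators, which together with Fact~\ref{hcl derivations} contradicts $\hcl_\F$-closedness of $C$ directly — either packaging of the contradiction works, and I would present whichever is cleaner after working out the coset reduction.
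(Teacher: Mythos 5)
Your overall strategy is the right one and matches the paper's: choose a $k_i$-linear basis of pairs $(b_{i,j},\exp_i(b_{i,j}))$ for $\Gamma_i(D)$ over $\Gamma_i(C)$, establish the no-coset condition, and invoke an Ax-type theorem from \cite{TEDESV} to force $\td(D/C)\ge n$, a contradiction. The paper reaches this via \cite[Theorem~3.8]{TEDESV} with $\Delta=\Der_\F(\C/C)$, concluding $\td(A/C)\ge n$ and hence $\grk_\F(A/C)\le\td(A/C)$ for every finite-transcendence-degree extension $A$ of $C$; you instead use \cite[Proposition~3.7]{TEDESV} to get $n$ independent forms in $\Omega(D/C)$. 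These are near-equivalent packagings.

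The real problem is the step you yourself single out as ``the only delicate point''. You introduce a minimality argument over all counterexamples $D$, claim that minimality is what forces the no-coset condition, and then only sketch --- but do not carry out --- a ``bookkeeping'' reduction to a smaller $D'$. None of this is needed, and what you do give is not a proof. The no-coset condition is a direct consequence of the basis choice, with no minimality in sight. If $(\bbar,\exp_G(\bbar))\in\gamma\cdot TH$ with $H=\prod_i H_i$ a proper algebraic subgroup and $\gamma\in TG(C)$, then some $H_i\prsubs E_i^{n_i}$; its Lie algebra is a proper $k_i$-subspace of $\C^{n_i}$, so there is a nonzero $k_i$-linear functional $\lambda=(\lambda_1,\ldots,\lambda_{n_i})$ vanishing on it and an associated endomorphism $[\lambda]$ of $E_i^{n_i}$ with $H_i\subs\ker[\lambda]$. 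Applying $(\lambda,[\lambda])$ to the coset membership shows that $\sum_j\lambda_j(b_{i,j},\exp_i(b_{i,j}))$ lands in $\Gamma_i(C)$, i.e.\ the chosen pairs are $k_i$-linearly dependent modulo $\Gamma_i(C)$ --- contradicting that they were a basis. Once you see this, the minimality scaffolding can be deleted; the paper simply takes an arbitrary finitely generated $A\sups C$ and an arbitrary maximal $k_i$-independent family and gets $\grk_\F(A/C)\le\td(A/C)$ directly (this also handles your worry about $\grk$ being infinite, since the bound $n\le\td(A/C)$ holds uniformly).

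Separately, your sketch never makes clear where the hypothesis that $C$ is $\hcl_\F$-closed is used, and it is essential: by Fact~\ref{hcl derivations}, $\hcl_\F$-closedness of $C$ is exactly what makes $C$ the constant field of $\Der_\F(\C/C)$, which is the hypothesis under which \cite[Theorem~3.8]{TEDESV} (or the Ax-type input behind Proposition~3.7) is applied. Your closing ``alternative packaging'' also has a circularity as stated: a nonzero form in $\Omega(D/C)$ dualises to a derivation on $D$, not on $\C$, and extending it to $\C$ is Proposition~\ref{extend derivations}, which requires $C\strong_\F\C$ --- the very thing being proved. You should instead run the argument as above, apply the Ax-type theorem over the $\hcl_\F$-closed $C$, and derive the contradiction from the transcendence-degree count alone.
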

\begin{proof}
Let $A \subs C$ be a subfield of $\C$, with $\td(A/C)$ finite. Suppose we have $x_{i,j} \in A$ for $i=1,\ldots,N$, $j  = 1,\ldots,n_i$ such that, for each $i$, the pairs $(x_{i,j},\exp_i(x_{i,j}))$ lie in $\Gamma_i(A)$ and are $k_i$-linearly independent over $\Gamma_i(C)$. Let $n = \sum_{i=1}^N n_i$, let $S = \prod_{i=1}^N E_i^{n_i}$ and let $TS = \ga^n \cross S$. Let $x$ be the tuple of all the $x_{i,j}$ and let $y$ be the tuple of all the $\exp_i(x_{i,j})$. Then in the notation of \cite{TEDESV} we have $(x,y) \in \Gamma_S(A) \subs TS(A)$.

Since the groups $E_i$ are pairwise non-isogenous, every algebraic subgroup of $S$ is of the form $\prod_{i=1}^N H_i$, where $H_i$ is an algebraic subgroup of $E_i^{n_i}$. Thus the $k_i$-linear independence condition implies that $(x,y)$ does not lie in any coset $\gamma \cdot TH$ where $H$ is a proper algebraic subgroup of $S$ and $\gamma$ is a point of $TH$ defined over $C$.

Thus, by \cite[Theorem~3.8]{TEDESV} with $F = \C$ and $\Delta = \Der_\F(\C/C)$, we have
\[\td(x,y/C) - \rk \Jac(x,y) \ge n.\]
The quantity $\rk \Jac(x,y)$ is the rank of a matrix, hence non-negative, so we have
\[\td(A/C) \ge \td(x,y/C) \ge n.\]
Hence $n$ is bounded above so we may choose the $x_{i,j}$ to make it maximal. Then $n = \grk_\F(A/C)$ and we conclude that
\[\delta_\F(A/C) = \td(A/C) - \grk_\F(A/C) \ge 0.\]
Hence $C \strong_\F \C$, as required.
\end{proof}

We write $\dim_\F$ for the dimension with respect to the pregeometry $\hcl_\F$.
\begin{prop}\label{predim dim}
Let $C$ be an $\hcl_\F$-closed subfield of $\C$, and let $A \subs \C$ be an extension of $C$ with $\td(A/C)$ finite. Then:
\begin{enumerate}[(i)]
\item $\dim_\F(A/C) = \min \class{\delta_\F(B/C)}{A \subs B \subs \C \mbox{ with }\td(B/A) \mbox{ finite}}$;
\item if $A \strong_\F \C$ then $\dim_\F(A/C) = \delta_\F(A/C)$.
\end{enumerate}
\end{prop}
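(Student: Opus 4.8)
The plan is to prove (i) first and then derive (ii) as an easy consequence. For (i), I would show the two inequalities separately. For the inequality $\dim_\F(A/C) \le \delta_\F(B/C)$ for every admissible $B$: enlarge $A$ to $B$ and use that, by the matroid/pregeometry axioms, $\dim_\F(A/C) \le \dim_\F(B/C) \le \td(B/C)$; more to the point, $\dim_\F(B/C)$ is the size of a maximal $\hcl_\F$-independent subset of (a transcendence basis of) $B$ over $C$, and by Fact~\ref{hcl derivations} this equals the maximal number of $\partial \in \Der_\F(\C/C)$ one can make "dual" to chosen elements of $B$. Each such derivation, restricted to the relevant data, kills the group-rank directions, so a counting argument of exactly the same shape as in the proof of Proposition~\ref{delta = 0 extension} (the dimension of $\Der_\F(B/C)$ inside $\Der(B/C)$ drops by at least $\grk_\F(B/C)$) gives $\dim_\F(B/C) \le \td(B/C) - \grk_\F(B/C) = \delta_\F(B/C)$. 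Since $\dim_\F(A/C) \le \dim_\F(B/C)$, this half is done.

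For the reverse inequality, I want to exhibit a single $B$ with $\delta_\F(B/C) = \dim_\F(A/C)$, or at least with $\delta_\F(B/C) \le \dim_\F(A/C)$. The natural candidate is $B = \hcl_\F(C \cup A) \cap A'$ for a suitable finitely-generated-over-$A$ field $A'$; more precisely, let $d = \dim_\F(A/C)$ and pick $a_1,\ldots,a_d \in A$ forming an $\hcl_\F$-basis for $A$ over $C$, so that $A \subs \hcl_\F(C a_1 \cdots a_d)$. Inside this holomorphic closure I would build, using analytic cell decomposition and genericity exactly as in the discussion preceding the holomorphic-closure section, a field $B$ with $A \subs B$, $\td(B/A)$ finite, every element of $B$ lying in $\dcl$/$\hcl$ of $C a_1 \cdots a_d$, and such that $B$ contains "enough" of the exponential data: concretely, $B$ should contain $\exp_i(b)$ for all $b \in B$ for which this is forced, so that $\grk_\F(B/C) = \td(B/Ca_1\cdots a_d)$. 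Then $\delta_\F(B/C) = \td(B/C) - \grk_\F(B/C) = \td(Ca_1\cdots a_d/C) = d$ (the $a_j$ being algebraically independent over $C$ as an $\hcl_\F$-basis), giving $\dim_\F(A/C) \ge \min_B \delta_\F(B/C)$.

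Part (ii) is then immediate: if $A \strong_\F \C$, then for any admissible $B \sups A$ we have $C \subs B$ and, since $C$ is $\hcl_\F$-closed, Proposition~\ref{closed implies strong} gives $C \strong_\F \C$; combined with $A \strong_\F \C$ one checks $C \strong_\F A$ (or argues directly) to get $\delta_\F(A/C) \ge 0$, and the subadditivity of $\delta_\F$ along $C \subs A \subs B$, namely $\delta_\F(B/C) = \delta_\F(B/A) + \delta_\F(A/C) \ge \delta_\F(A/C)$ using $\delta_\F(B/A)\ge 0$ from $A\strong_\F\C$, shows $A$ itself realizes the minimum in (i). Hence $\dim_\F(A/C) = \delta_\F(A/C)$.

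The main obstacle I expect is the reverse inequality in (i): one must produce the field $B$ that simultaneously "saturates" the group-rank (so that no further exponential relations can appear, which is what forces $\grk_\F(B/C)$ to match the transcendence degree over $Ca_1\cdots a_d$) while keeping $\td(B/C)$ under control and keeping $B$ inside $\hcl_\F(Ca_1\cdots a_d)$. This is where the Ax-type input from \cite{TEDESV} and the derivation-extension machinery (Propositions~\ref{delta = 0 extension}, \ref{simple extension}, \ref{extend derivations}) should be combined: extend a witnessing tuple of $\Der_\F$-derivations from $C$ past $A$ and use that, on a strong-enough $B$, any element not forced to be algebraic over $Ca_1\cdots a_d$ can be separated by a derivation vanishing on $C$, contradicting $\hcl_\F$-dependence. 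Getting the bookkeeping of this saturation correct, rather than any single estimate, is the delicate point.
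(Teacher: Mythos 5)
Your overall plan reverses the paper's logical order: the paper proves (ii) first via the chain decomposition (Lemma~\ref{chain lemma}) together with Propositions~\ref{delta = 0 extension}, \ref{simple extension} and \ref{extend derivations}, and then observes that (i) follows because the $B$ witnessing the minimum is automatically $\F$-strong in $\C$; you propose to prove (i) first and obtain (ii) as a corollary. Your step (a), the lower bound $\dim_\F(A/C)\le\delta_\F(B/C)$ for every admissible $B$, is essentially correct: since $C$ is $\hcl_\F$-closed, $C\strong_\F B$ for every such $B$, so the dimension count from the proof of Proposition~\ref{delta = 0 extension} applies and gives $\dim_B\Der_\F(B/C)=\delta_\F(B/C)$; combined with the observation that an $\hcl_\F$-independent $n$-tuple in $B$ yields, via Fact~\ref{hcl derivations} and restriction, $n$ independent elements of $\Der_\F(B,\C/C)\cong\C\otimes_B\Der_\F(B/C)$, this gives $\dim_\F(B/C)\le\delta_\F(B/C)$, and monotonicity of $\dim_\F$ finishes the half. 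Your derivation of (ii) from (i) via $\delta_\F(B/C)=\delta_\F(B/A)+\delta_\F(A/C)\ge\delta_\F(A/C)$ is also fine.

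The genuine gap is step (b): exhibiting a $B\supseteq A$ of finite transcendence degree with $\delta_\F(B/C)\le\dim_\F(A/C)$. You describe building a $B$ inside $\hcl_\F(Ca_1\cdots a_d)$ that ``saturates'' the group rank, but you give no mechanism for controlling $\td(B/Ca_1\cdots a_d)$ while forcing $\grk_\F(B/C)$ to match it, and the appeal to ``analytic cell decomposition and genericity'' does not produce such a field. The difficulty is precisely what (ii) is designed to resolve: once one knows $\dim_\F=\delta_\F$ on $\F$-strong subfields, one can take any $B$ with $A\subseteq B\subseteq\hcl_\F(Ca_1\cdots a_d)$ which minimizes $\delta_\F(B/C)$ within that constraint; such a $B$ is $\F$-strong in $\hcl_\F(Ca_1\cdots a_d)$, hence (by Proposition~\ref{closed implies strong} and transitivity of strongness) $\F$-strong in $\C$, and so $\delta_\F(B/C)=\dim_\F(B/C)\le d$. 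Without (ii) in hand, your saturation argument would have to re-derive the content of the chain lemma and the derivation-extension propositions, so the proposal does not actually shortcut the paper's route --- it inverts the order and leaves the harder implication unproved.
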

\begin{proof}
For (i), since $C \strong_\F \C$, the minimum exists and is non-negative. If $B$ witnesses the minimum then $B \strong_\F \C$, and so it remains to show (ii).

So suppose $A \strong_\F \C$. From Lemma~\ref{chain lemma} there is a chain
\[ C = A_0 \strong_\F A_1 \strong_\F \cdots \strong_\F A_r = A\]
with $\delta(A_i/A_{i-1}) = 0$ or $\delta(A_i/A_{i-1}) = \td(A_i/A_{i-1}) = 1$ for each $i = 1,\ldots,r$.

If $\delta(A_i/A_{i-1}) = 1$ then by Proposition~\ref{simple extension} we have $\dim \Der_\F(A_i/A_{i-1}) = 1$. Since $A_i \strong_\F \C$, these derivations all extend to $\C$, and hence by Fact~\ref{hcl derivations} we have $\dim_\F(A_i/A_{i-1}) = 1$. If $\delta(A_i/A_{i-1}) = 0$ then from Proposition~\ref{extend derivations} it follows that $\dim  \Der_\F(A_i/A_{i-1}) = 0$ and hence $\dim_\F(A_i/A_{i-1}) = 0$.

Now $\delta_\F(A/C) = \sum_{i=1}^r \delta_\F(A_i/A_{i-1})$ and $\dim_\F(A/C) = \sum_{i=1}^r \dim_\F(A_i/A_{i-1})$, so we have
$\dim_\F(A/C) = \delta_\F(A/C)$ as required.
\end{proof}

\section{Definability of other functions}

\begin{lemma}
Suppose $C \strong_\F \C$ is an $\F$-strong subfield and $A, B$ are extensions of $C$ of finite transcendence degree. We write $AB$ to mean the subfield of $\C$ generated by $A \cup B$. Then:
\begin{enumerate}
\item $\grk_\F(AB/C) + \grk_\F(A \cap B/C) \ge \grk_\F(A/C) + \grk_\F(B/C)$ (we say $\grk$ is upper semi-modular)
\item $\td(AB/C) + \td(A\cap B/C) \le \td(A/C) + \td(B/C)$ (transcendence degree is lower semi-modular, or submodular)
\item $\delta(AB/C) + \delta(A\cap B/C) \le \delta(A/C) + \delta(B/C)$ (predimension is submodular)
\item If $A \subs B$ and $\F_1 \subs \F_2$ then $\grk_{\F_1}(A/C) \le \grk_{\F_2}(B/C)$ (monotonicity in $A$ and $\F$).
\end{enumerate}
\end{lemma}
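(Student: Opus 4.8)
The plan is to prove (1) and (2), deduce (3) formally from them, and observe that (4) is immediate; the content sits entirely in (1) and (2), and all the quantities involved are finite since $C \strong_\F \C$ forces $\grk_\F(A/C) \le \td(A/C) < \infty$, and similarly for $B$ and $AB$. For (1), since $\grk_\F = \sum_{i=1}^N \grk_i$, it suffices to prove upper semi-modularity of each $\grk_i$ and then sum. Fix $i$ and work inside the $k_i$-vector space $\Gamma_i(\C)$. The key point is that $K \mapsto \Gamma_i(K) = \Gamma_i(\C) \cap (\ga \cross E_i)(K)$, on subfields $C \subs K \subs \C$, behaves like a lattice map: $\Gamma_i(A \cap B) = \Gamma_i(A) \cap \Gamma_i(B)$ and $\Gamma_i(AB) \sups \Gamma_i(A) + \Gamma_i(B)$, the sum being of $k_i$-subspaces (using that each $\Gamma_i(K)$ is a $k_i$-subspace). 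The $AB$-inclusion and the ``$\subs$'' half of the intersection identity are just monotonicity of $K \mapsto \Gamma_i(K)$; for the ``$\sups$'' half, a point $(b,P) \in \Gamma_i(A) \cap \Gamma_i(B)$ has $b \in A \cap B$ and $P \in E_i(A) \cap E_i(B)$, and one checks $E_i(A) \cap E_i(B) = E_i(A \cap B)$ by normalising the projective coordinates of $P$ (or noting that $P = O$ is $\Q$-rational; for the $\exp$ case $\gm$ is affine and this is even more immediate). Since $\Gamma_i(A), \Gamma_i(B)$ are subspaces of $\Gamma_i(\C)$ all containing $\Gamma_i(C)$, the vector-space modular law gives
\[
\dim \frac{\Gamma_i(A) + \Gamma_i(B)}{\Gamma_i(C)} + \dim \frac{\Gamma_i(A) \cap \Gamma_i(B)}{\Gamma_i(C)} = \grk_i(A/C) + \grk_i(B/C);
\]
feeding in the two inclusions above yields $\grk_i(AB/C) + \grk_i(A \cap B/C) \ge \grk_i(A/C) + \grk_i(B/C)$, and summing over $i$ proves (1).

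For (2) I would use the classical bound $\td(AB/B) \le \td(A/(A \cap B))$: if $S$ is a transcendence basis of $A$ over $A \cap B$, then every element of $A$ is algebraic over $(A \cap B)(S) \subs B(S)$, so $A$ — and hence $AB$ — is algebraic over $B(S)$, giving $\td(AB/B) \le |S|$. Two applications of the tower law for transcendence degree then give
\[
\td(AB/C) + \td(A \cap B/C) = \td(AB/B) + \td(B/C) + \td(A \cap B/C) \le \td(A/(A \cap B)) + \td(A \cap B/C) + \td(B/C) = \td(A/C) + \td(B/C).
\]

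Part (3) is now purely formal: writing $\delta(\,\cdot\,/C) = \td(\,\cdot\,/C) - \grk_\F(\,\cdot\,/C)$ (all terms finite, so there is no $\infty - \infty$ issue), subtract the inequality of (1) from that of (2). Part (4) is immediate: $C \subs A \subs B$ gives $\Gamma_i(C) \subs \Gamma_i(A) \subs \Gamma_i(B)$, hence $\grk_i(A/C) \le \grk_i(B/C)$ for each $i$; summing over $f_i \in \F_1$ and then discarding the extra non-negative terms $\grk_i(B/C)$ for $f_i \in \F_2 \minus \F_1$ yields $\grk_{\F_1}(A/C) \le \grk_{\F_2}(B/C)$. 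The one step that genuinely needs care is the lattice-map property of $K \mapsto \Gamma_i(K)$ in (1), and inside it the point-level identity $E_i(A) \cap E_i(B) = E_i(A \cap B)$ for the projective curve (handling the scaling of projective coordinates and the identity element); everything else is routine bookkeeping with the modular law and the tower law.
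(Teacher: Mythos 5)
Your proof is correct. The paper itself offers no argument (it simply says ``Straightforward''), so there is nothing to compare against; but your write-up is a complete and accurate filling-in of the details, and you have correctly identified the only step that is not pure bookkeeping, namely the inclusion $\Gamma_i(A)\cap\Gamma_i(B)\subseteq\Gamma_i(A\cap B)$ (equivalently $E_i(A)\cap E_i(B)\subseteq E_i(A\cap B)$), which you dispose of by passing to affine Weierstrass coordinates away from $O$, and which is exactly what makes the modular-law computation for $\grk_i$ go through in the right direction. The reduction of (3) to (1) and (2), the finiteness remark that justifies subtracting, and parts (2) and (4) are all standard and handled correctly.
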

\begin{proof}
Straightforward.
\end{proof}

Now we can prove our main technical result.
\begin{theorem}\label{main technical result}
Let $\F_1$ and $\F_2$ be sets of Weierstrass $\wp$-functions, with one or both possibly also containing complex exponentiation. Let $\F_0 = \F_1 \cap \F_2$ and suppose that no $\wp$-function from $\F_1 \minus \F_0$ is isogenous to any $\wp$-function in $\F_2$, or to the Schwarz reflection of a $\wp$-function in $\F_2$.

Suppose that $f: U \to \C$ is a holomorphic function which is locally definable (with parameters) with respect to $\R_{\PR(\F_1)}$ and with respect to $\R_{\PR(\F_2)}$. Then $f$ is locally definable (with parameters) almost everywhere in $U$, with respect to $\R_{\PR(\F_0)}$, in the sense of \ref{gen def prop}.
\end{theorem}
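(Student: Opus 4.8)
The plan is to turn the two definability hypotheses into statements about holomorphic closure via Proposition~\ref{gen def prop}, and then to prove a clean algebraic fact: for every subfield $X\subs\C$ over which all the relevant curves are defined, $\hcl_{\F_1}(X)\cap\hcl_{\F_2}(X)=\hcl_{\F_0}(X)$.

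First I would reduce to the setting of the preceding sections. By Proposition~\ref{ISR-def} (and the symmetry of \ISR-equivalence) one can replace each $\wp$-function by an \ISR-equivalent one at the cost of finitely many parameters $\bar\alpha$, and this changes neither the hypothesis nor the conclusion, both of which allow parameters. So we may assume that $\F\leteq\F_1\cup\F_2$ consists of functions from pairwise distinct \ISR-classes, that $\F_0=\F_1\cap\F_2$, and that the hypothesis says exactly that $\F_1\minus\F_0$ and $\F_2\minus\F_0$ are disjoint — whence automatically no function in $\F_1\minus\F_0$ is isogenous to (a Schwarz reflection of) any function in $\F_2$, and dually. Fix an $\hcl_\F$-closed subfield $C\subs\C$, necessarily countable as $\F$ is finite, containing the parameters over which the $E_i$ are defined, the isogeny parameters $\bar\alpha$, and parameters $\bar p_1,\bar p_2$ witnessing local definability of $f$ with respect to $\R_{\PR(\F_1)}$ and $\R_{\PR(\F_2)}$. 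Since $f$ agrees near each $a\in U$ with a holomorphic function definable over $\bar p_i$ in $\R_{\PR(\F_i)}$, we obtain $f(a)\in\hcl_{\F_1}(Ca)\cap\hcl_{\F_2}(Ca)$ for every $a\in U$.

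Next I would prove the key lemma $\hcl_{\F_1}(X)\cap\hcl_{\F_2}(X)=\hcl_{\F_0}(X)$; only $\subs$ needs an argument. Recall from the discussion after Fact~\ref{hcl derivations} that $\Omega_{\F_i}(\C/X)=\Omega(\C/X)/W_i$, where $W_i$ is the $\C$-span of the forms $\omega_h^x=h'(x)\,dx-d h(x)$ for $h\in\F_i$ and $x\in\dom h$, and that $b\in\hcl_{\F_i}(X)$ iff $db\in W_i$. Thus it suffices to show $W_1\cap W_2=W_0$. For a single $h\in\F$ write $W_h$ for the span of the $\omega_h^x$; using the addition formula for $\exp_h$, together with the fact that $\omega_h^{x'}=0$ in $\Omega(\C/X)$ whenever $(x',\exp_h(x'))\in\Gamma_h(X)$, one checks that $W_h$ is spanned by those $\omega_h^e$ with $(e,\exp_h(e))$ running over a $k_h$-linear basis of $\Gamma_h(\C)/\Gamma_h(X)$. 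Since the $E_i$ are pairwise non-isogenous, every algebraic subgroup of a product $\prod_i E_i^{n_i}$ is a product of subgroups cut out by $k_i$-linear equations, so the point assembled from such basis elements lies on no proper coset defined over $X$; hence \cite[Proposition~3.7]{TEDESV} shows that the union over $h\in\F$ of these basis-forms is $\C$-linearly independent in $\Omega(\C/X)$. Therefore $W\leteq\sum_{h\in\F}W_h$ is the internal direct sum $\bigoplus_{h\in\F}W_h$, and since $\F_1\minus\F_0$ and $\F_2\minus\F_0$ are disjoint, $W_1\cap W_2=\bigl(W_0\oplus\bigoplus_{h\in\F_1\minus\F_0}W_h\bigr)\cap\bigl(W_0\oplus\bigoplus_{h\in\F_2\minus\F_0}W_h\bigr)=W_0$. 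Combined with the previous paragraph this gives $f(a)\in\hcl_{\F_0}(Ca)$ for all $a\in U$.

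Finally I would apply the version of Proposition~\ref{gen def prop} with parameters from the countable set $C$; its proof is unchanged, enumerating the pairs $(U_i,g_i)$ with $g_i$ holomorphic and definable over finitely many parameters from $C$ on a connected open $U_i\subs U$ definable over such parameters. This yields an open $U'\subs U$ with $U\minus U'$ of measure zero — countable when $n=1$ — such that $f\restrict{U'}$ is locally definable with parameters in $\R_{\PR(\F_0)}$; undoing the isogeny reduction costs only the finite tuple $\bar\alpha$, which is allowed. The main obstacle is the key lemma $W_1\cap W_2=W_0$, and inside it the step reducing the a priori enormous space $W_h$ to a $k_h$-basis of $\Gamma_h(\C)/\Gamma_h(X)$ and feeding the non-isogenous situation into the functional transcendence result of \cite{TEDESV}; the remaining steps are routine bookkeeping and mild adaptations of results proved earlier in the paper.
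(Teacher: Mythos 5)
Your approach is genuinely different from the paper's, and in some ways more illuminating, so let me spell out the comparison and the one place where I think you owe the reader more care.

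The paper's proof works pointwise: for each $a\in U$ it fixes an $\hcl_{\F_3}$-closed $C$ (with $\F_3=\F_1\cup\F_2$), introduces fields $B_1,B_2$ achieving the minima in Proposition~\ref{predim dim}, and then chases a chain of inequalities built from the submodularity of $\delta$, the upper semi-modularity of $\grk$, and the monotonicities in the small lemma preceding the theorem, eventually forcing $\dim_{\F_0}(f(a)/Ca)=0$. You instead try to prove the clean vector-space identity $W^X=\bigoplus_{h\in\F}W_h^X$ inside $\Omega(\C/X)$ and deduce $\hcl_{\F_1}(X)\cap\hcl_{\F_2}(X)=\hcl_{\F_0}(X)$ outright, from which the theorem falls out immediately. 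The translation $b\in\hcl_\F(X)\iff db\in W$ via the double-annihilator (using Fact~\ref{hcl derivations} and the discussion after it) is correct; so is the observation that, by the invariance of the differential $dx/y$ on $E$, the assignment $(a,\exp_h(a))\mapsto \tilde\omega^a=da-\frac{dh(a)}{h'(a)}$ is a $k_h$-module homomorphism on $\Gamma_h(\C)$ killing $\Gamma_h(X)$, so that $W_h$ is spanned by the forms attached to any $k_h$-basis of $\Gamma_h(\C)/\Gamma_h(X)$ (this is essentially the same computation as in the paper's Proposition~\ref{delta = 0 extension}, just packaged as a statement about $W_h$ itself); and the verification that such a basis tuple avoids all proper cosets over $X$ is the same argument the paper uses. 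What you buy is a structural statement — the identity $\hcl_{\F_1}(X)\cap\hcl_{\F_2}(X)=\hcl_{\F_0}(X)$ — that is stronger and more quotable than the pointwise conclusion, and a proof that sidesteps the predimension/strong-extension machinery of Sections 6 almost entirely. What the paper buys is that its use of \cite[Proposition~3.7]{TEDESV} is entirely inside the envelope of $\F$-strong extensions where the rest of the section already operates, so no extra hypothesis-checking is needed.

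The point you should not gloss over is exactly that last one. Everywhere the paper invokes \cite[Proposition~3.7]{TEDESV} (in Proposition~\ref{delta = 0 extension}) it does so for a pair $A\strong_\F B$ with $\delta_\F(B/A)=0$, i.e.\ after the Ax--Schanuel inequality $\td(B/A)\ge\grk_\F(B/A)$ has already been arranged. You apply the same proposition to $\Omega(B/X)$ for an arbitrary finitely generated $B$ over an arbitrary base $X$ (and then pass to $\Omega(\C/X)$). Whether the coset condition alone, with no strongness assumption, yields the $B$-linear independence of the forms is precisely the content of the cited proposition, and you need to check its actual statement rather than infer it from the paper's narrower use. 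Relatedly, you should replace $X$ by its relative algebraic closure in $\C$ before choosing the $k_h$-basis (you may freely do so, since $\Omega(\C/X)=\Omega(\C/\bar X)$ and $\hcl_\F(X)=\hcl_\F(\bar X)$): otherwise a basis element $(e,\exp_h(e))\notin\Gamma_h(X)$ could still have $e$ and $\wp(e)$ in $X$ with only $\wp'(e)$ missing, giving $\tilde\omega^e=0$ and spoiling the independence without violating the coset condition over $X$ itself. With these two points nailed down, your argument is a valid and arguably cleaner alternative; without them, the direct-sum claim $W=\bigoplus_h W_h$ — and hence the key identity — is not yet established.
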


\begin{proof}
Let $\F_3 = \F_1 \cup \F_2$. We may assume that $\F_3$ is finite and contains at most one representative of each \ISR-class. Let $C$ be a countable subfield of $\C$ which is $\hcl_{\F_3}$-closed and contains the parameters needed to define $f$. It follows that $C$ is $\hcl_{\F_i}$-closed for all $i=0,1,2,3$.

Let $a \in U$. Then for $i=1,2$ we have $f(a) \in \hcl_{\F_i}(C a)$, so $\dim_{\F_i}(f(a)/Ca) = 0$. Let $d_i = \dim_{\F_i}(a/C)$ for $i=0,1,2,3$. By Proposition~\ref{predim dim} there are subfields $B_1$ and $B_2$ of $\C$ containing $C \cup \{a,f(a)\}$ such that $\delta_{\F_i}(B_i/C) = d_i$. We choose the $B_i$ to be the smallest such fields, so $B_i \strong_\F \C$, and we let $A = B_1 \cap B_2$ and let $B$ be the subfield of \C\ generated by $B_1 \cup B_2$.

Let $\F_4 = \F_1\minus \F_0$ and $\F_5 = \F_2 \minus \F_0$, so $\F_0$, $\F_4$ and $\F_5$ are disjoint. Henceforth we write $\grk_i$ for $\grk_{\F_i}$, $\delta_i$ for $\delta_{\F_i}$, and $\dim_i$ for $\dim_{\F_i}$.

Then we have:
\begin{eqnarray*}
\grk_3(B/C) &=& \grk_4(B/C) + \grk_5(B/C) + \grk_0(B/C) \\
& \ge & \grk_4(B_1/C) + \grk_5(B_2/C) +  \grk_0(B/C)  \mbox{ by monotonicity}\\
& \ge & \grk_4(B_1/C) + \grk_5(B_2/C) + \grk_0(B_1/C) + \grk_0(B_2/C) - \grk_0(A/C)\\
&&  \mbox{ by upper semi-modularity}\\
& = &  \grk_1(B_1/C) + \grk_2(B_2/C) - \grk_0(A/C). \\
\end{eqnarray*}

Also $\td(B/C) \le  \td(B_1/C) + \td(B_2/C) - \td(A/C)$ by submodularity, so:
\begin{align*}
d_3 &\le  \delta_3(B/C) \\
&=  \td(B/C) -\grk_3(B/C) \\
& \le \td(B_1/C) + \td(B_2/C) - \td(A/C)  \\
& \hspace{1cm} - [\grk_1(B_1/C) + \grk_2(B_2/C) - \grk_0(A/C)]\\
& \le \delta_1(B_1/C) + \delta_2(B_2/C) - \delta_0(A/C)\\
\end{align*}
and so $\delta_0(A/C) \le d_1 + d_2 - d_3$.
Now we have $d_3 \le \min(d_1,d_2)$ since $\F_3 \sups \F_1, \F_2$ and so $\delta_0(A/C) \le d_3$.  So $\dim_0(a,f(a)/C) \le d_3$ by Proposition~\ref{predim dim} again, but since $\F_0 \subs \F_3$ we have $\dim_0(a/C) \ge d_3$ and hence $\dim_0(f(a)/C a) = 0$. So $f(a) \in \hcl_{\F_0} (Ca)$.
This applies to all $a \in U$, and hence, by Proposition~\ref{gen def prop}, $f$ is locally definable (with parameters from $C$) almost everywhere in $U$, with respect to $\R_{\PR(\F_0)}$.
\end{proof}

We can now complete the proof of Theorem~\ref{interdefinability}.
\begin{proof}[Proof of Theorem~\ref{interdefinability}]
Proposition~\ref{ISR-def} shows that if $g$ is ISR-equivalent to one of the $f_i$ then it is locally definable in $\RPRF$. So suppose $g$ is not ISR-equivalent to any of the $f_i$. By Proposition~\ref{ISR-def}, we may suppose the $f_i$ are all from different \ISR-classes. Applying Proposition~\ref{main technical result} with $\F_1 = \F$ and $\F_2 = \{g\}$, we see that $g$ is not locally definable in $\RPRF$.
\end{proof}

\section{Definable sets}

In the case where $\F_0 = \emptyset$, we are able to extend our result from holomorphic functions to all definable sets. This is because we can show that all definable sets come in some way from definable holomorphic functions.

%
\begin{prop}\label{holomorphic extensions} Suppose that $\F$ is a collection of holomorphic functions. If $f:U\to \R$ is an analytic function definable in the structure $\RPRF$, on $U$, an open subset of $\R^n$, then there exists an $\RPRF$-definable subset $X$ of $U$ of dimension at most $n-1$, an open subset $W$ of $\C^n$ with $W\cap \R^n = U\setminus X$ and an $\RPRF$-definable holomorphic $F:W\to \C$ such that $F\restriction_{U\setminus X} = f$.
\end{prop}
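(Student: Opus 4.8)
The plan is to use o-minimal cell decomposition, now in analytic form, to reduce to the case where $f$ is given by a convergent power series, and then complexify. First I would invoke analytic cell decomposition for $\RPRF$ (Gabrielov's theorem, as mentioned in the excerpt just before Proposition~\ref{gen def prop}, or the forthcoming Lemma~\ref{analytic cell decomp}) to decompose $U$ into finitely many cells on each of which $f$ is real-analytic, and on which moreover $f$ agrees locally with a function given by a convergent multivariable power series. The ``bad'' set $X$ will be built from the cells of dimension $< n$ together with the topological frontiers of the top-dimensional cells within $U$; this $X$ is $\RPRF$-definable and has dimension at most $n-1$. On each of the finitely many open cells $U_1,\dots,U_k$ covering $U\setminus X$, $f\restrict{U_j}$ is analytic, so it has a local analytic continuation; the content is to patch these into a single holomorphic $F$ on a complex neighbourhood $W$ of $U\setminus X$ with $W\cap\R^n = U\setminus X$, and to check $F$ is $\RPRF$-definable.

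The key steps, in order. Step 1: obtain the analytic cell decomposition and define $X$ so that $U\setminus X$ is open in $\R^n$ and $f$ is analytic on it. Step 2: on each open cell, use definability to witness analyticity by definable data — concretely, for each rational box $\Delta$ with $\bar\Delta$ inside a cell, the Taylor coefficients of $f$ at the centre of $\Delta$ are in $\dcl_\F$ of the centre, and the Taylor series converges on $\Delta$; this lets us write the local holomorphic continuation $F_\Delta$ on a complex box, and $F_\Delta$ is $\RPRF$-definable because its graph is cut out by the (definable) condition of being the unique holomorphic function restricting to $f$. Step 3: by the identity theorem these local continuations agree on overlaps, so they glue to a holomorphic $F$ on $W=\bigcup_\Delta (\text{complex box})$, an open subset of $\C^n$; shrinking if necessary we arrange $W\cap\R^n=U\setminus X$. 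Step 4: definability of $F$ on all of $W$ follows from Lemma~\ref{local = pointwise} (local definability equals definability of all proper restrictions) once we know each $F_\Delta$ is definable, since a proper complex box in $W$ is covered by finitely many of the $F_\Delta$-domains. The equality $F\restrict{U\setminus X}=f$ holds by construction.

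The main obstacle is Step 2 together with the coherence of the complexification near the frontier: ensuring that the complex neighbourhood $W$ can be chosen so that $W\cap\R^n$ is \emph{exactly} $U\setminus X$ and not something smaller, and that the radius of convergence of the local Taylor expansions does not shrink to zero as one approaches $\partial U$ or $X$. This is handled by working with proper restrictions: for a proper restriction $f\restrict{\Delta}$ with $\bar\Delta\subs U\setminus X$, analyticity on a neighbourhood of $\bar\Delta$ gives a uniform lower bound on the radius of convergence over $\bar\Delta$, hence a complex box $\Delta_\C$ with $\Delta_\C\cap\R^n$ a neighbourhood of $\bar\Delta$ in $\R^n$, on which $F_\Delta$ is defined; taking the union over all such $\Delta$ exhausts $U\setminus X$ on the real side by Lemma~\ref{local = pointwise}, and gives the required $W$. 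The definability of each $F_\Delta$ — the other delicate point — comes from the fact that being holomorphic and restricting to a given definable real-analytic function on a totally real slice is a first-order definable property in the o-minimal structure (one quantifies over the finitely many Taylor coefficients, which lie in the definable closure of the box's centre), so $F_\Delta$ has an $\RPRF$-definable graph.
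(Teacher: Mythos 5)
There is a genuine gap, and it is at the heart of the argument. Your Step 2 asserts that ``being holomorphic and restricting to a given definable real-analytic function on a totally real slice is a first-order definable property'' and that one can ``quantify over the finitely many Taylor coefficients.'' Neither assertion holds. A real-analytic function has infinitely many Taylor coefficients, and the condition ``$F$ is holomorphic'' is not a first-order condition in the language of $\RPRF$ (or of any o-minimal expansion of $\Rbar$). Moreover, knowing that a holomorphic extension $F_\Delta$ of $f\restrict\Delta$ \emph{exists} and is \emph{unique} does not give you a formula cutting out its graph: in $\Rbar$ the restriction of $\exp$ to $(0,1)$ is uniquely determined by being holomorphic with certain definable initial data, yet it is not $\Rbar$-definable. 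Uniqueness is a semantic fact, not a syntactic construction, so your Steps~3 and 4 have nothing to glue.

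The paper's proof works very differently and is constructive at precisely this point. It first shows that the \emph{terms} of (a variant of) the language of $\RPRF$ have definable holomorphic extensions, by an explicit formula: if $\phi,\psi$ are the real and imaginary parts of a proper restriction of $F\in\F$, then
\[
F_\phi(z,w)=\tfrac12\bigl(F(z+\ii w)+\overline{F(\bar z+\ii \bar w)}\bigr),\qquad
F_\psi(z,w)=\tfrac12\bigl(-\ii F(z+\ii w)+\overline{-\ii F(\bar z+\ii \bar w)}\bigr),
\]
which are holomorphic, definable, and extend $\phi,\psi$. Then, to pass from terms to arbitrary definable $f$, it invokes Gabrielov's model completeness for the associated structure together with a result of Jones and Wilkie for locally polynomially bounded structures: off a set $X$ of dimension at most $n-1$, $f$ is implicitly defined by a nondegenerate system $g_{i,j}(x,\phi_{i,1},\ldots,\phi_{i,m_i})=0$ with the $g_{i,j}$ \emph{terms}. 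Complexifying the $g_{i,j}$ and applying the complex implicit function theorem (with a definable choice of radius) then yields the definable holomorphic extension $F$ on a definable open $W\subs\C^n$. This implicit-function step is the part your proposal is missing, and it cannot be replaced by an appeal to the uniqueness of analytic continuation.
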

\begin{proof}
 For the result we will quote we need our functions to be total, so we first fix for each open box in $\R^m$ and each $m$ a semialgebraic analytic isomorphism between the box and $\R^m$. Let $\Rtilde$ be the expansion of the real field by all real and imaginary parts of proper restrictions of functions in $\F$ suitably composed with these isomorphisms to make total functions, and then add all derivatives of these functions. Note that the definable sets in $\Rtilde$ and in $\RPRF$ are the same. For this proof, we write definable to mean definable in either of these structures. Suppose that $F\in \F$ and that $\phi$ and $\psi$ are the real and imaginary parts of some proper restriction of $F$. For convenience, suppose that the box these functions are restricted to is $[-1,1]^{2n}$. Define holomorphic functions $F_\phi,F_\psi$ near $0$ in $\C^{2n}$ by
 \begin{eqnarray*}
 F_{\phi}(z,w) &=& \frac{F(z+iw)+\overline{F(\overline{z}+i \overline{w})}}{2} \\
 F_{\psi}(z,w)&=& \frac{-i\cdot F(z+iw)+\overline{-i\cdot F(\overline{z}+i \overline{w})}}{2}
 \end{eqnarray*}
 where the bars denote coordinatewise complex conjugation. These functions, near $0$, extend $\phi$ and $\psi$ respectively and are definable. Using compactness, we see that the real and imaginary parts of any proper restriction of functions in $\F$ have definable holomorphic extensions, and so the same is true for the functions in the language of $\Rtilde$ and so for all terms in the language of $\Rtilde$ (after these are identified with functions).

 By a theorem of Gabrielov \cite[Theorem~1]{gabrielovcomplement} the structure $\Rtilde$ is model complete. It is also polynomially bounded, and hence it is, in the sense of \cite{jw} \emph{locally polynomially bounded} (we do not need the definition here). So we can apply Corollary 4.5 of \cite{jw} to our function $f$ to obtain  definable open sets $U_1,\ldots, U_k$ such that the dimension of $U\setminus \bigcup U_i$ is at most $n-1$ and such that for each $i$ there exist terms $g_{i,1},\ldots,g_{i,m_i}:\R^{n+m_i}\to \R$ in the language of $\Rtilde$, with $m_i\ge 1$ and definable functions $\phi_{i,1},\ldots,\phi_{i,m_i}:U_i\to \R$ such that
\begin{eqnarray*}
g_{i,1}(x,\phi_{i,1},\ldots,\phi_{i,m_i}(x)) &=&0 \\
&\vdots &\\
g_{i,m_i}(x,\phi_{i,1},\ldots,\phi_{i,m_i}(x))&=&0\\
\end{eqnarray*}
and
\begin{equation*}
\det \begin{pmatrix}
\d{g_{i,1}}{x_{n+1}} & \cdots &  \d{g_{i,1}}{x_{n+m_i}} \\
\vdots & \vdots &\vdots \\
\d{g_{i,m_i}}{x_{n+1}} & \cdots &  \d{g_{i,m_i}}{x_{n+m_i}}
\end{pmatrix}
(x) \ne 0
\end{equation*}
for all $x \in U_i$ and $\phi_{i,1}=f\restriction_{U_i}$. Let $X=U\setminus \bigcup U_i$. We prove the conclusion for each restriction $f\restriction_{U_i}$ and for simplicity of notation we drop the $i$ and suppose that the above holds throughout $U$. So we have terms $g_1,\ldots,g_m :\R^{n+m}\to \R$ and definable functions $\phi_1,\ldots,\phi_m:U\to \R$ such that $f=\phi_1$ and the equations and inequation above hold on $U$. As we noted above, the terms $g_1,\ldots,g_m$ all have definable holomorphic extensions. So we obtain definable holomorphic functions $G_1,\ldots,G_m$ on a definable open set $V$ in $\C^{n+m}$ with $V\cap\R^{n+m}=\R^{n+m}$. The graph of $f$ is contained in the common zero set $Z$ of $G_1,\ldots,G_m$ and at each point of this graph the hypotheses of the complex implicit function theorem hold. For each $x\in U$ let $\epsilon_x>0$ be such that the complex implicit function theorem holds on  $W_x:= \{ z\in V: |z-x|< \epsilon_x\}$ so that $Z$ is the graph of a map above $W_x$. By definable choice we can take $\epsilon$ to be definable function of $x$ and then the union $W$  of the $W_x$ is a definable open set. The projection of $Z$ to $\C^{n+1}$ intersected with $W\times \C$  is the graph of a holomorphic extension of $f$ on $W$, and is definable.
\end{proof}

We need a cell decomposition result which may be known, though it is not well-known.
\begin{lemma}\label{analytic cell decomp}
Suppose that $\mathcal R$ is an expansion of $\Rbar$ in which every definable set is also definable in $\Ran$. Then $\mathcal R$ has analytic cell decomposition.
\end{lemma}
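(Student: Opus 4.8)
The plan is to reduce the statement to the known fact that $\Ran$ itself has analytic cell decomposition. Recall that $\Ran$ admits analytic cell decomposition: this is part of the work of van den Dries and Miller on $\Ran$ (and follows from the analyticity properties of subanalytic sets), so every $\Ran$-definable set can be partitioned into finitely many analytic cells. The subtlety is that we need cell decomposition \emph{inside $\mathcal{R}$}, meaning that the cells and the defining functions must be definable in $\mathcal{R}$, not merely in $\Ran$. So the issue is one of transferring the decomposition back down along the inclusion of definable sets.

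First I would set up the bookkeeping: fix a definable set $X \subs \R^n$ in $\mathcal{R}$, together with finitely many auxiliary definable sets $X_1, \dots, X_k$ that we wish to decompose compatibly. By hypothesis each $X_j$ is definable in $\Ran$, so by analytic cell decomposition in $\Ran$ there is a finite partition of $\R^n$ into analytic cells compatible with all the $X_j$. The key observation is that the \emph{combinatorial data} of such a decomposition — the tree of projections, and for each cell whether it is a graph or a band — together with the \emph{uniqueness} properties of cells can be used to show the decomposition is canonical enough to be recovered in $\mathcal{R}$. More precisely, I would argue by induction on $n$, as in the usual proof of cell decomposition: the analytic cell decomposition theorem for $\Ran$ gives, over each cell $D$ in $\R^{n-1}$, finitely many analytic functions $\xi_1 < \dots < \xi_m : D \to \R$ whose graphs and the bands between them form the cells above $D$. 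These functions are $\Ran$-definable, but one can choose them canonically — e.g. as the distinct continuous branches of the frontier behaviour of $X$ above $D$ — so that the resulting partition of $\R^n$ is definable from $X$ in any o-minimal structure defining $X$, in particular in $\mathcal{R}$. The analyticity of the $\xi_i$ is then inherited from the $\Ran$-decomposition, since a function that is $\mathcal{R}$-definable and whose graph is an analytic cell of $\Ran$ is an analytic function.

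Concretely, the inductive step would run as follows. Given $X$ and the auxiliary sets in $\R^n$, project to $\R^{n-1}$ and apply the inductive hypothesis to the projections together with the sets $\{x' \in \R^{n-1} : X_{x'} \text{ has } \le \ell \text{ components}\}$ and similar auxiliary sets, all of which are $\mathcal{R}$-definable and $\Ran$-definable; this yields an $\mathcal{R}$-definable analytic cell decomposition $\mathcal{D}$ of $\R^{n-1}$. Over each cell $D \in \mathcal{D}$, the fibre $X_{x'}$ has a constant number $m$ of ``boundary points'', and the functions $x' \mapsto (\ell\text{-th such point})$ are $\mathcal{R}$-definable; by the $\Ran$-decomposition refining $\mathcal{D} \times \R$ we know these functions can be taken analytic on $D$, hence being $\mathcal{R}$-definable with analytic graph they are analytic. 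This produces the desired $\mathcal{R}$-definable analytic cell decomposition of $\R^n$, and monotonicity/continuity refinements are handled the same way.

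The main obstacle I anticipate is making precise the claim that the cell-boundary functions can be chosen \emph{canonically} — i.e.\ definably from $X$ alone and uniformly — so that the $\Ran$-analyticity transfers to functions that are a priori only $\mathcal{R}$-definable. One has to be careful that the ``$\ell$-th boundary point of the fibre'' is genuinely a definable function on each cell of the base (this uses that on a cell the number of boundary points is constant and they vary continuously, a consequence of o-minimality applied in $\mathcal{R}$) and that its graph coincides, cell by cell, with one of the analytic graphs furnished by the $\Ran$-decomposition (which requires that the $\Ran$-decomposition can be taken to refine the $\mathcal{R}$-definable decomposition of the base — true since both live in $\Ran$). Once this alignment is in place, analyticity is immediate and the rest is the standard inductive packaging of cell decomposition.
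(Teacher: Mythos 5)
Your high-level strategy---use the known analytic cell decomposition of $\Ran$ and transfer the analyticity back down to $\mathcal{R}$ via canonically chosen bounding functions---is in the right spirit, and you correctly flag the key difficulty (aligning the two decompositions). But the resolution you offer does not actually close the gap. Suppose by induction you have an $\mathcal{R}$-definable analytic cell decomposition $\mathcal{D}$ of $\R^{n-1}$, let $D \in \mathcal{D}$, and let $\xi: D \to \R$ be one of the $\mathcal{R}$-definable bounding functions you want to show is analytic. An $\Ran$-analytic cell decomposition of $\R^n$ refining $\mathcal{D}\times\R$ will in general subdivide $D$ into smaller $\Ran$-cells; above each \emph{open} such subcell $D_j$ the decomposition gives analytic graphs, and $\xi$ agrees with one of them there, so $\xi$ is analytic on the union of the open $D_j$. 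That union is only a dense open subset of $D$, not all of $D$, and a continuous function analytic on a dense open set need not be analytic (e.g.\ $x\mapsto |x|$). So ``analyticity is immediate'' is not correct. To recover you must carve off the non-analyticity locus of $\xi$ and recurse, and for that you need that locus to be $\mathcal{R}$-definable (not merely $\Ran$-definable)---and establishing \emph{that} is the real content of the lemma.

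The paper closes exactly this gap by invoking Tamm's theorem directly rather than treating the analytic cell decomposition of $\Ran$ as a black box. In the form used there (see \cite{tamm} and \cite{vdd:mill:tamm}), Tamm's theorem says that for an $\Ran$-definable function there is a finite $p$ such that being $\mathcal{C}^p$ on an open set already implies being analytic there. Since $\mathcal{C}^p$-cell decomposition holds in any o-minimal expansion of $\Rbar$, one can take an $\mathcal{R}$-definable $\mathcal{C}^p$-cell decomposition; on its open cells the relevant functions are $\mathcal{C}^p$, hence analytic by Tamm, and the residue has strictly smaller dimension so induction finishes. (The paper also first reparametrizes the base cell by a definable analytic diffeomorphism onto an open cell before applying Tamm, a technical convenience.) If you insert Tamm's theorem in this way to make the non-analytic residue $\mathcal{R}$-definable, your argument works, but at that point it essentially coincides with the paper's proof; as written, it is incomplete.
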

\begin{proof}
Throughout this proof, by definable
we mean definable with parameters in $\mathcal{R}$.
It suffices to show that every definable cell $C\subseteq\mathbb{R}^{n}$
is a disjoint union of finitely many definable analytic cells. This
is proven by induction on the pairs $\left(n,k\right)$, where $k$
is the dimension of $C$. Clearly, we may suppose that $n>1$ and
$k>0$.

Suppose first that $C=\text{graph}\left(f\right)$, where $f:C'\to\mathbb{R}$
is definable and $C'\subseteq\mathbb{R}^{n-1}$ is a definable cell
of dimension $k$. By the inductive hypothesis,
we may suppose that $C'$ is an analytic cell. So
we can find a definable analytic diffeomorphism $\varphi:C'\to C''$
onto an open analytic cell $C''\subseteq\mathbb{R}^{k}$, with analytic
inverse $\psi:C''\to C'$. By Tamm's Theorem (see \cite[Theorem 2.3.3]{tamm}
and also the formulation given in \cite[p. 1367]{vdd:mill:tamm}),
there exists $p\in\mathbb{N}$ such that, if $U\subseteq C''$ is
an open set and $f\circ\varphi:C''\to\mathbb{R}$ is $\mathcal{C}^{p}$
on $U$, then it is actually analytic on $U$. Consider a $\mathcal{C}^{p}$-cell
decomposition of $C''$ and let $D_{1},\ldots,D_{l}$ be the open
cells. Then $\psi\left(D_{i}\right)\subseteq C'$ is an analytic cell
for each $i=1,\ldots,l$ and so is $\text{graph}\left(f\restriction_{\psi\left(D_{i}\right)}\right)\subseteq C$.
Notice that $\text{dim}\left(C\setminus\bigcup_{i=1}^{l}\psi\left(D_{i}\right)\right)<k$,
hence by the inductive hypothesis on $k$ and by further cell decomposition,
the set $C\setminus\bigcup_{i=1}^{l}\psi\left(D_{i}\right)$ has an
analytic cell decomposition.

Suppose now that $C=\left\{ \left(x,y\right):\ x\in C',\ f\left(x\right)<y<g\left(x\right)\right\} $,
where $f,g:C'\to\mathbb{R}$ are definable and $C'\subseteq\mathbb{R}^{n-1}$
is a definable cell of dimension $k-1$. By the first part of this
proof, there are analytic cell decompositions $\mathcal{D}_{f}$ and
$\mathcal{D}_{g}$ of $\text{graph}\left(f\right)$ and $\text{graph}\left(g\right)$,
respectively. Let $\mathcal{D}'$ be an analytic cell decomposition
of $C'$ which is compatible with every set $\pi\left(D\right)$,
where $D\in\mathcal{D}_{f}\cup\mathcal{D}_{g}$ and $\pi:\mathbb{R}^{n}\ni\left(x,y\right)\mapsto x\in\mathbb{R}^{n-1}$
is the projection onto the first $n-1$ coordinates. By the inductive
hypothesis on $k$, it suffices to remark that, by construction, for
every cell $D\in\mathcal{D}'$ of dimension $k-1$, the set $C\cap\left(D\times\mathbb{R}\right)$
is a disjoint union of finitely many analytic cells.
\end{proof}

The following lemma is proved by inserting the word `analytic' at appropriate points in the proof of \cite[Lemma 1.1 page 32]{wilkiecovering}.
\begin{lemma}\label{retraction}
Suppose that $C \subs \R^n$ is an analytic cell definable in an o-minimal expansion of \Rbar. Then there is an open analytic cell $U \subs \R^n$ such that $C \subs U$ and an analytic retraction $\theta: U \to C$ (that is, for each $a \in U$ we have $\theta(a) = a$) which are definable in the structure $\tuple{\Rbar,C}$.
\end{lemma}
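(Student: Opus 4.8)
The plan is to follow the reference \cite[Lemma~1.1, p.~32]{wilkiecovering} closely and track the analytic regularity through each step of that argument. Recall that the cited lemma produces, for a definable cell $C \subs \R^n$, an open definable cell $U$ containing $C$ together with a definable retraction $\theta : U \to C$; what we need is that when $C$ is an \emph{analytic} cell, both $U$ and $\theta$ can be taken analytic, and moreover definable in the reduct $\tuple{\Rbar, C}$.

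First I would set up the induction on $n$ exactly as in Wilkie's proof. If $C$ is already open in $\R^n$ there is nothing to do: take $U = C$ and $\theta = \id$. Otherwise write $C$ over a cell $C' \subs \R^{n-1}$ in one of the two usual ways. If $C$ is the graph of an analytic definable $f : C' \to \R$, apply the inductive hypothesis to $C'$ to get an open analytic cell $U' \sups C'$ with an analytic definable retraction $\theta' : U' \to C'$; then one extends $f$ to an analytic function on $U'$ — here is exactly where analyticity requires care, and I would use Lemma~\ref{retraction}'s sibling machinery or, more simply, compose with $\theta'$ and with an analytic extension of $f$ off $C'$, noting that the construction in \cite{wilkiecovering} produces such an extension by a distance-type formula that is analytic wherever the data are. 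Set $U = U' \cross \R$ (or a suitable open analytic subcell thereof) and $\theta(x,y) = (\theta'(x), \tilde f(\theta'(x)))$, which is analytic and retracts onto $C$. If instead $C = \class{(x,y)}{x \in C',\ f(x) < y < g(x)}$ with $f,g$ analytic definable (allowing $\pm\infty$), one takes $U' \sups C'$ open analytic with analytic retraction $\theta'$ as before, extends $f$ and $g$ analytically, and defines $\theta(x,y) = (\theta'(x), h(x,y))$ where $h$ pushes $y$ into the open interval $(f(\theta'(x)), g(\theta'(x)))$ by an explicit analytic reparametrisation (an affine or arctangent-type squashing), again analytic because the endpoint functions are.

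Throughout, the point about parameters is automatic: every piece of the construction is built from $C$ (and the coordinate projections, and the semialgebraic scaffolding already available in $\Rbar$), so everything is definable in $\tuple{\Rbar, C}$; one just has to observe at each inductive step that the cell $C'$, the functions $f, g$ defining $C$ over $C'$, and hence their analytic extensions, all lie in $\dcl$ of $C$ in that structure. The analytic cell decomposition needed to run the induction cleanly is available by Lemma~\ref{analytic cell decomp} applied inside $\tuple{\Rbar, C}$, provided this structure is a reduct of $\Ran$; since $C$ is definable in an o-minimal expansion of $\Rbar$ which we may take to be a reduct of $\Ran$ (all the structures $\RPRF$ are such reducts by Gabrielov's theorem), the hypothesis of Lemma~\ref{analytic cell decomp} is met.

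The main obstacle is the analytic extension step: in \cite{wilkiecovering} the retraction and the extension of the defining functions are produced by formulas (infima of distances, and the like) that are only Lipschitz or $\mathcal{C}^1$ in general, not analytic. To repair this one replaces those formulas, near the cell, by their analytic counterparts — using that on an analytic cell the graph is a closed analytic submanifold with an analytic tubular neighbourhood (this is the content one extracts from the proof of Lemma~\ref{retraction} together with standard facts on analytic cells), so the nearest-point retraction onto $C$ is analytic on a small enough open analytic cell $U$, and composing the given analytic $f,g$ with it yields analytic extensions. Once this is in place the two inductive cases go through verbatim with the word ``analytic'' inserted, which is precisely the recipe indicated before the statement.
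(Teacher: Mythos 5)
Your overall skeleton (induction on $n$, splitting into the graph and band cases, applying the inductive hypothesis to the base cell $C'$) is the right one and matches the paper's intent of ``inserting the word analytic'' into Wilkie's argument. But the step you identify as ``the main obstacle'' --- analytically extending $f$ (and $g$) off $C'$ --- is not actually required, and the machinery you bring in to handle it (analytic tubular neighbourhoods, nearest-point retractions) is a dead end. The point is that once you have the analytic retraction $\theta' : U' \to C'$, the composition $f \circ \theta'$ is already an analytic function on all of $U'$, because $\theta'$ takes values in $C'$ where $f$ is defined and analytic. So in the graph case you can simply set $U = U' \times \R$ and $\theta(x,y) = \bigl(\theta'(x),\, f(\theta'(x))\bigr)$: this is analytic, maps onto $C$, and is the identity on $C$ since $\theta'$ is the identity on $C'$. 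The auxiliary $\tilde f$ you introduce is only ever evaluated at points of $C'$, so it coincides with $f$ there and plays no role.

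Two further corrections. In the band case $C = \{(x,y) : x \in C',\ f(x) < y < g(x)\}$ with $f,g$ finite, there is \emph{no} continuous map $h(x,\cdot) : \R \to \bigl(f(\theta'(x)), g(\theta'(x))\bigr)$ that restricts to the identity on that open interval (its one-sided limit at an endpoint would have to equal that endpoint, which lies outside the image), so the ``affine or arctangent-type squashing'' from $U' \times \R$ cannot give a retraction. Instead one shrinks the domain: take $U = \{(x,y) : x \in U',\ f(\theta'(x)) < y < g(\theta'(x))\}$, which is an open analytic cell, and $\theta(x,y) = (\theta'(x), y)$ --- no reparametrisation of the fibre is needed. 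Finally, the appeal to Lemma~\ref{analytic cell decomp} (and the attendant worry about $\tuple{\Rbar,C}$ being a reduct of $\Ran$, which in fact can fail, e.g.\ if $C$ is the full graph of $e^x$) is unnecessary: the hypothesis already says $C$ is an analytic cell, and by definition the base cell $C'$ of an analytic cell is again an analytic cell, so the inductive hypothesis applies directly with no cell decomposition step.
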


We can now prove Theorem~\ref{intersection semialg}.

\begin{proof}[Proof of Theorem~\ref{intersection semialg}]
Suppose that $X \subs \R^n$ is definable in $\R_{\F_1}$ and in $\R_{\F_2}$. By Lemma~\ref{analytic cell decomp} there is an analytic cell decomposition of $X$ in the structure $\tuple{\Rbar;X}$, which is then a decomposition in both $\R_{\F_1}$ and $\R_{\F_2}$. Hence it suffices to prove the theorem in the case where $X$ is an analytic cell, $C \subs \R^n$.

We proceed by induction on $n$, with the case $n=1$ being trivial. So suppose $n > 1$.

If $C = \graph(f)$ with $f:C' \to \R$ then, by induction, $C'$ is a semialgebraic cell. Let $\theta: U' \to C'$ be the analytic retraction provided by Lemma~\ref{retraction}, and note that it is definable in $\R_{\F_1}$ and $\R_{\F_2}$. The function $f \circ \theta : U' \to \R$ is analytic and definable in  $\R_{\F_1}$ and $\R_{\F_2}$. Hence using Proposition~\ref{holomorphic extensions} its holomorphic extension $g$ to some open set $W \subs \C^n$ is  definable both in $\R_{\F_1}$ and in $\R_{\F_2}$. But then, by Theorem~\ref{main technical result}, some restriction of $g$ to an open set is definable in $\Rbar$, so is a semialgebraic function. But $g$ is holomorphic so it must be an algebraic function, and hence $f \circ \theta$ and $f$ are also algebraic functions. Hence $C$ is a semialgebraic cell.

Otherwise $C$ is a parametrized interval of the form $(f,g)_{C'}$. By the previous case, $f$ and $g$ are semialgebraic functions on $C'$, so $C$ is also semialgebraic.
\end{proof}

Note that in this last result we have worked with parameters. Care needs to be taken when formulating an analogue for $0$-definable sets. For example, we could take $\mathcal{F}_1$ to consist of the exponential on its own, and $\F_2$ to consist of the $\wp$-function associated to the curve $Y^2=4X^3-e$, where $e=\exp(1)$. Then $e$ is $0$-definable in both $\R_{\F_1}$ and in $\R_{\F_2}$. But it is certainly not $0$-definable in $\Rbar$! So at the very least, the conclusion would be that a set that is $0$-definable in both structures is $0$-definable in the expansion of the real field by the constants occurring in the equations for all the curves associated to the $\wp$-functions in $\F_1$ and $\F_2$. Perhaps this is the correct formulation, but it seems hopeless to prove as it makes assertions about relations between values of iterated exponentials and $\wp$-functions. For example, take $\F_1$ to again consist of the exponential only and $\F_2$ to consist of a single $\wp$-function now associated to a curve defined by an equation over the rationals. Then it may be that $e^e$ and $\wp(\wp(\wp(1)))$ are transcendental but interalgebraic, and in that case $e^e$ would be a 0-definable point in both $\R_{\F_1}$ and $\R_{\F_2}$, but not 0-definable in $\Rbar$. Ruling out all such coincidences is part of the content of Bertolin's \emph{Conjecture Elliptico-Torique} \cite{Bertolin02}, which is a special case of the Andr\'e-Grothendieck conjecture on the periods of 1-motives.

\section{A Counting Application}
Our main result can be used to show that certain functions are transcendental. For example, suppose that $\F$ consists of all $\wp$-functions, that  $f:(a,b)\to (a',b')$ is an analytic homeomorphism definable in $\R_{\exp}$ and that $g:(a',b')\to (a',b')$ is a transcendental function definable in $\R_{\F}$. Then the function $h:(a,b)\to (a,b)$ defined by
$$
h(t)=f^{-1}(g(f(t)))
$$
is transcendental. For otherwise, we would have $h$ algebraic and then $g= f\circ h\circ f^{-1}$ would be definable in $\Rexp$, which would contradict Theorem~\ref{main technical result}.

Using this together with recent results on counting rational points on definable sets in o-minimal structures (that is, results starting with the Pila-Wilkie theorem \cite{pw}) we can count points of the form $f(q)$ on certain sets, in terms of the height of $q$. We give an example in the direction of transcendence theory. First, recall that if a rational $q$ is written in lowest terms as $a/b$ then the height $H(q)$ is defined as the maximum of $|a|$ and $|b|$, and that this height function is extended to tuples by taking coordinatewise maximum. Now suppose that, as above, we take $\F$ to be the collection of all $\wp$-functions. We take $f$ above to be the logarithm and so count points of the form $(\log p,\log q)$ on the graph of an $\R_{\F}$ definable function $g:\R\to\R$, in terms of the height of $(p,q)$. This is clearly the same as counting rational points $(p,q)$ on the graph of the function $h(t)=\exp (g(\log t))$. This function is definable in the expansion $\R_{\F,\exp}$ of $\R_{\F}$ by the exponential function. Adding the Pfaffian chains of the functions in the language of $\R_{\F}$ (see \cite{Macintyrepfun}) and constants to the language of $\R_{\F,\exp}$, we obtain a reduct of the expansion of the real field by all Pfaffian functions and this reduct is model complete by a result of van den Dries and Miller \cite[Corollary 6.12(ii)]{vdDMillerWilkie}. By the application of Theorem~\ref{main technical result} described above, the function $h$ is transcendental. Hence we can apply a result due to the first author and Thomas \cite[Theorem 4.4]{jt} to obtain the following.
\begin{prop} Suppose that $g:\R\to\R$ is definable in the expansion of the real field by all proper restrictions of the real and imaginary parts of all Weierstrass $\wp$-functions. Then there exist positive $c$ and $k$ depending on $g$ with the following property. There are at most
$$
c(\log H)^k
$$
positive rationals $p$ and $q$ of height at most $H$ such that $(\log p,\log q)$ lies on the graph of $g$.
\end{prop}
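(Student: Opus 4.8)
The plan is to follow the strategy indicated just before the statement. First I would replace the points $(\log p,\log q)$ by honest rational points. Put $h(t)=\exp(g(\log t))$, defined for $t>0$. For positive rationals $p,q$ we have $g(\log p)=\log q$ if and only if $h(p)=q$; hence the positive rationals $p,q$ with $H(p),H(q)\le H$ for which $(\log p,\log q)$ lies on $\graph(g)$ correspond bijectively to the rational points $(p,q)$ of height at most $H$ on $\graph(h)$ (positivity of $p,q$ being automatic, as $\log$ is defined on $(0,\infty)$ and $\exp$ takes positive values). Thus it suffices to bound the number of rational points of height at most $H$ on $\graph(h)$ by $c(\log H)^k$ for suitable positive $c,k$.

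Next I would place $h$ in a model complete Pfaffian reduct. Since $g$ is definable in $\R_{\F}$, where $\F$ is the set of all $\wp$-functions, and $\log=\exp^{-1}$, the function $h$ is definable in $\R_{\F,\exp}$. By Macintyre's observations \cite{Macintyrepfun} the real and imaginary parts of proper restrictions of $\wp$-functions, and of the exponential, are Pfaffian; so adjoining the associated Pfaffian chains together with the constants occurring in the relevant differential equations yields an o-minimal reduct $\mathcal M$ of the expansion of the real field by all Pfaffian functions in which $h$ remains definable, and $\mathcal M$ is model complete by van den Dries and Miller \cite[Corollary 6.12(ii)]{vdDMillerWilkie}. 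This is the ambient structure needed for the point-counting theorem \cite[Theorem 4.4]{jt}.

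I would then record that $h$ is transcendental. If $h$ were semialgebraic then $\graph(g)=\{(s,t):(\exp s,\exp t)\in\graph(h)\}$ would be definable in $\Rexp$ as well as in $\R_{\F}$; since no $\wp$-function is isogenous to the exponential or to its Schwarz reflection, Theorem~\ref{main technical result} --- or, applied directly to the set $\graph(g)$, Theorem~\ref{intersection semialg} --- would force $\graph(g)$ to be semialgebraic, contradicting the transcendence of $g$. Hence $h$ is transcendental, and so $\graph(h)$ coincides with its own transcendental part.

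Finally I would apply \cite[Theorem 4.4]{jt}: for a transcendental function definable in a model complete reduct of the real field expanded by all Pfaffian functions, the graph contains at most $c(\log H)^k$ rational points of height at most $H$, for some positive $c,k$ depending on the function. Feeding in $h$ and translating back through the bijection of the first step gives the stated bound. The step I expect to demand the most care is the second one: checking that $h$ genuinely lies in a model complete Pfaffian reduct in the precise sense required by \cite[Theorem 4.4]{jt} --- that is, assembling the Pfaffian presentations of the real and imaginary parts of the $\wp$-functions and combining them with the model-completeness input --- whereas the transcendence of $h$ is an immediate consequence of the paper's main results and the reduction to $\graph(h)$ is routine.
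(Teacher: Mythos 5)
Your proposal is correct and follows essentially the same route as the paper: pass to $h(t)=\exp(g(\log t))$, note that rational points on $\graph(h)$ biject with the points $(\log p,\log q)$ being counted, place $h$ in a model complete Pfaffian reduct via Macintyre's observations and van den Dries--Miller, deduce transcendence of $h$ from the interdefinability results, and then apply \cite[Theorem 4.4]{jt}. The only cosmetic difference is that you observe Theorem~\ref{intersection semialg} can be applied directly to the real set $\graph(g)$, whereas the paper invokes Theorem~\ref{main technical result} (a statement about holomorphic functions) without spelling out the passage from the real function $g$ to a holomorphic one; your variant is in fact the more immediately applicable of the two. Note that both your argument and the paper's implicitly require $g$ to be transcendental (as in the schematic paragraph preceding the proposition, where $g$ is taken transcendental); without it the claim fails for, say, $g(x)=2x$, so this hypothesis should be regarded as part of the statement.
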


Taking another $f$ in place of the logarithm, or interchanging the role of the structures, leads to other similar results.
\section*{Acknowledgements}
We thank Sergei Starchenko for a useful conversation. We also thank the Max Planck Institute for Mathematics, Bonn and the Centro di Ricerca Matematica Ennio De Giorgi,
Pisa, where some of this work was carried out.

Gareth Jones was supported by EPSRC grants EP/F043236/1, EP/E050441/1 and EP/J01933X/1.

Tamara Servi was supported by by FCT PEst OE/MAT/UI0209/2011, by FCT Project PTDC/MAT/122844/2010 and by FIRB 2010 ``New advances in Model Theory of exponentiation''.

\bibliographystyle{alpha}



\end{document}